\newcommand{\N}{\mathbb{N}}
\newcommand{\Z}{\mathbb{Z}}
\newcommand{\Q}{\mathbb{Q}}
\newcommand{\R}{\mathbb{R}}
\newcommand{\E}{\mathbb{E}}
\newcommand{\ud}{\,\mathrm{d}}
\newcommand{\e}{{\mathrm{e}}}
\newcommand{\PP}{\mathbf{P}}
\newcommand{\F}{\mathcal{F}}
\newcommand{\Sc}{\mathcal{S}}
\newcommand{\HH}{\mathcal{H}}
\newcommand{\q}{\mathfrak{q}}
\newcommand{\I}{\mathbbm{1}}
\newcommand{\Deltad}{{\Delta_{\mathsf{d}}}}
\DeclareMathOperator{\supp}{supp}
\DeclareMathOperator{\diam}{diam}
\DeclareMathOperator{\spec}{spec}
\DeclareMathOperator{\sgn}{sgn}
\DeclareMathOperator{\dista}{dist}
\DeclareMathOperator{\Cov}{Cov}
\DeclareMathOperator{\Id}{Id}
\numberwithin{equation}{section}
\numberwithin{figure}{section}
\newtheorem{theorem}{Theorem}[section]
\newtheorem{lemma}[theorem]{Lemma}
\newtheorem{remark}[theorem]{Remark}
\theoremstyle{definition}
\newtheorem{definition}[theorem]{Definition}
\begin{document}

\title{Finite range decompositions of Gaussian fields with applications to level-set percolation}
\author{Florian Schweiger\thanks{
Department of Mathematics, 
Weizmann Institute of Science,
Rehovot 76100, Israel.\newline
Current address: Section des mathématiques, Université de Genève, rue du Conseil-Général 7-9, 1205 Genève, Switzerland.\newline
Email: \texttt{florian.schweiger@unige.ch}}
}
\date{\today}
\maketitle
\begin{abstract}
In a recent work \cite{M22}, Muirhead has studied level-set percolation of (discrete or continuous) Gaussian fields, and has shown sharpness of the associated phase transition under the assumption that the field has a certain multiscale white noise decomposition, a variant of a finite-range decomposition.
We show that a large class of Gaussian fields have such a white noise decomposition with optimal decay parameter. Examples include the discrete Gaussian free field, the discrete membrane model, and the mollified continuous Gaussian free field. This answers various questions from \cite{M22}.

Our construction of the white-noise decomposition is a refinement of Bauerschmidt's construction of a finite-range decomposition \cite{B13}.
In the continuous setting our construction is very similar to Bauerschmidt's, while in the discrete setting several new ideas are needed, including the use of a result by P\'olya and Szeg\H{o} on polynomials that take positive values on the positive real line.

\end{abstract}

\paragraph{Keywords:} finite range decomposition, positive definite kernel, Gaussian field, level set percolation.

\paragraph{MSC subject codes (2020):} 60G15 (35J08, 82B43)

\section{Introduction and main results}
\subsection{Muirhead's work on level-set percolation}
Recently, Muirhead \cite{M22} has introduced a class of Gaussian fields for which there is a certain multiscale white noise decomposition. Let us reproduce his definition here, slightly rephrased:

\begin{definition}\label{d:Falpha}
Let $d\ge 2$, $\boldsymbol\alpha>0$. A continuous Gaussian field $f$ on $\R^d$ is in class $\F_{\boldsymbol\alpha}$ if we can write it as
\[f\overset{\text{law}}{=}\q*_1 W:=\int_{\R^d\times\R^+}\q(\cdot-y,t)\ud W(y,t)\]
for some standard white noise $W$ on $\R^d\times\R^+$ and some $\q\in L^2(\R^d\times\R^+)$ satisfying
\begin{itemize}
\item[(i)] (Symmetry) We have $\q(x,t)=\q(-x,t)$ for all $x,t$.
\item[(ii)] (Smoothness) We have $\q(\cdot,t)\in C^3(\R^d)$ for all $t$, and $\nabla_x^k\q\in L^2(\R^d\times\R^+)\cap L^\infty_{\text{loc}}(\R^d\times\R^+)$ for any $k\in\{0,1,2,3\}$.
\item[(iii)] (Non-degeneracy) There is $t_*\ge0$ such that $\q(\cdot,t_*)$ is not identically 0 and if $t_n\to t_*$ then $\q(\cdot,t_n)\to \q(\cdot,t_*)$ in $L^1(\R^d)$.
\item[(iv)] (Finite range) We have $\supp \q\subset\left\{(x,t)\colon |x|\le\frac t2\right\}$.
\item[(v)] (Decay) There is a constant $C$ such that
\[\int_{\R^d\times[t,\infty)}\q^2(x,s)\ud x\ud s\le\frac{C}{t^{\boldsymbol\alpha}}.\]
\end{itemize}
A discrete Gaussian field $f$ on $\Z^d$ is in class $\F_{\boldsymbol\alpha}$ if it is the restriction of a continuous field in $\F_{\boldsymbol\alpha}$ to $\Z^d$, or more precisely, if we can write it as
\[f\overset{\text{law}}{=}\q*_1 W\restriction_{\Z^d}:=\int_{\R^d\times\R^+}\q(\cdot-y,t)\ud W(y,t)\restriction_{\Z^d}\]
for some standard white noise $W$ on $\R^d\times\R^+$ and some $\q\in L^2(\R^d\times\R^+)$ satisfying (i), (iii), (iv), (v) (but not necessarily (ii)).
\end{definition}

In the following we will call a decomposition as in Definition \ref{d:Falpha} a white noise decomposition (to distinguish it from other kinds of finite-range decompositions).

Muirhead's motivation is to study level-set percolation of (discrete or continuous) Gaussian fields. Namely, given a continuous Gaussian field $f$, one considers a cut-off $\ell$ and studies the percolative properties of the set $\{x\colon f(x)\ge -\ell\}$. One defines the critical level $\ell_c$ as the infimum of all $\ell$ such that the probability of 0 being in an unbounded cluster in $\{x\colon f(x)\ge -\ell\}$ is positive. For most fields of interest, $\ell_c\in(-\infty,\infty)$, i.e. there is a nontrivial phase transition. It is a natural question whether this phase-transition is sharp.

Muirhead's main result is on level-set percolation of fields in $\F:=\bigcup_{{\boldsymbol\alpha}>0}\F_{\boldsymbol\alpha}$, and in particular he proves sharpness of the phase transition for level-set percolation of any field in $\F$, with decay parameter depending on ${\boldsymbol\alpha}$. More precisely, the result \cite[Theorem 1.2]{M22} states that for any (discrete or continuous) field in $\F_{\boldsymbol\alpha}$ for any subcritical level the connection probabilities decay stretched-exponentially with any exponent in $[0,{\boldsymbol\alpha})\cap[0,1]$, while for any supercritical level the density of the infinite cluster grows polynomially with any rate in $\left(\frac{2(d-1)}{{\boldsymbol\alpha}},\infty\right)\cap[1,\infty)$.

Level set percolation had previously been studied in the case of the discrete Gaussian free field (see for example \cite{RS13, DPR18}), and in the case of the (discrete) membrane model \cite{CN21}. A continuous field of great interest is given by the continuous Gaussian free field, mollified by convolution with a sufficiently smooth kernel (see Section \ref{s:fields} for a reminder of how these fields are defined).
For the discrete Gaussian free field, sharpness of the phase transition had recently been shown in a break-through result in \cite{DGRS20}). For the other two models, sharpness had been an open question.

This lead to the natural question whether Muirhead's result applies to these fields, or in other words whether they are in $\F$ and what the optimal parameter ${\boldsymbol\alpha}$ is. Muirhead proved that the discrete Gaussian free field is in $\F_{(d-2)/2}$ and that the discrete membrane model is in $\F_{(d-4)/2}$. In particular, this established for the first time sharpness of the phase transition for the membrane model in $d\ge5$. However, as Muirhead himself remarks, based on the decay of correlations of the respective fields these results are not optimal, and in fact he conjectures that the discrete Gaussian free field is in $\F_{d-2}$, and the membrane model in $\F_{d-4}$. As for the mollified continuous Gaussian free field, he poses as an open question whether it is in $\F$.

The main goal of the present paper is to resolve these questions positively.

\begin{theorem}\label{t:fields_in_Falpha}
The discrete Gaussian free field in dimension $d\ge3$ and the membrane model in dimension $d\ge5$ are in classes $\F_{d-2}$ and $\F_{d-4}$, respectively. Moreover, the continuous Gaussian free field in dimension $d\ge3$, mollified by some $k\in C^3_c(\R^d)$ which is symmetric around 0, is in class $\F_{d-2}$, and the continuous membrane model in dimension $d\ge5$, mollified by some $k$ as above, is in $\F_{d-4}$.
\end{theorem}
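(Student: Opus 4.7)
My plan is to construct, for each of the four fields, an explicit white-noise kernel $\q(x,t)$ realising the covariance and to verify the axioms (i)--(v) of Definition \ref{d:Falpha}. The starting point is Bauerschmidt's finite-range decomposition \cite{B13}: for a positive local operator $A$ on $\R^d$ it produces a one-parameter family of symmetric positive definite kernels $K_t$ supported in $\{|x-y|\le t/2\}$ with $A^{-1}=\int_0^\infty K_t\,\ud t$. Crucially, in Bauerschmidt's scheme each $K_t$ comes with an intrinsic convolution representation $K_t(x,y)=\int_{\R^d} q_t(x-z)q_t(y-z)\,\ud z$ for a symmetric $q_t$ of the same support, which provides the natural candidate $\q(x,t):=q_t(x)$.

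For the continuous Gaussian free field and membrane model I would apply this with $A=-\Delta$ and $A=\Delta^2$ respectively, so that $A^{-1}$ is the relevant covariance. Properties (i)--(iv) of Definition \ref{d:Falpha} are then immediate from the construction of $q_t$ together with the ellipticity of $A$. For property (v), I would use the identity
\[
\int_{\R^d\times[t,\infty)}\q^2(x,s)\,\ud x\,\ud s=\int_t^\infty K_s(0)\,\ud s
\]
together with the Green's-function asymptotic $A^{-1}(x)\asymp|x|^{-\boldsymbol\alpha}$ and the support condition on $K_s$ to deduce $K_s(0)\asymp s^{-\boldsymbol\alpha-1}$, which yields the claimed bound with the optimal exponent $\boldsymbol\alpha=d-2$ (respectively $d-4$). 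The mollified continuous cases are then handled by convolving in the spatial variable, $\q(\cdot,t)*k$; the support of the new kernel is enlarged by $\supp k$, but this is absorbed by a linear reparametrisation $t\mapsto t+c$, and the decay exponent is unchanged since mollification only affects small scales.

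The hard part will be the discrete case, because Bauerschmidt's construction relies on the finite-propagation speed of wave-type operators, which has no exact analogue on $\Z^d$, and a naive restriction of the continuous kernel to $\Z^d$ does not decompose the correct covariance. My strategy is to work Fourier-analytically on the torus $\T^d$, seeking a representation
\[
\hat G_{\Z^d}(\xi)=\Bigl(\sum_{i=1}^d 2(1-\cos\xi_i)\Bigr)^{-1}=\int_0^\infty|\hat\q(\xi,t)|^2\,\ud t
\]
(and the analogous identity for the membrane model), where the support condition $\supp\q(\cdot,t)\subset\{|x|\le t/2\}$ becomes, via Paley--Wiener, the requirement that $\hat\q(\cdot,t)$ extend to an entire function on $\C^d$ of exponential type $\le t/2$. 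Running Bauerschmidt's scale decomposition on $\hat G_{\Z^d}$ produces, at each scale, a non-negative trigonometric polynomial that must be factored as $|h(\xi)|^2$ with $h$ an exponential polynomial of controlled type. This is precisely the step where the P\'olya--Szeg\H{o} theorem on polynomials positive on $[0,\infty)$ enters, supplying the algebraic factorisation that preserves the support constraint at every scale. Once this factorisation is in place, property (v) with the optimal exponent $\boldsymbol\alpha=d-2$ (resp.\ $d-4$) follows from the sharp asymptotics of the discrete Green's function, which match their continuous analogues at long distances.
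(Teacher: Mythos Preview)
Your continuous-case sketch rests on a claim that is not in \cite{B13}: you assert that each $K_t$ in Bauerschmidt's decomposition ``comes with an intrinsic convolution representation $K_t(x,y)=\int q_t(x-z)q_t(y-z)\,\ud z$ for a symmetric $q_t$ of the same support''. This is exactly what has to be \emph{constructed}. In Bauerschmidt's scheme $K_t$ is the kernel of $w_t(L)$ with $w_t(\lambda)=c_0\varphi(\lambda^{\gamma/2}t)$ for some Schwartz $\varphi\ge0$ with $\supp\widehat\varphi\subset[-1,1]$. The operator square root $\sqrt{w_t}(L)$ exists, but $\sqrt{\varphi}$ need not have compactly supported Fourier transform, so the associated $q_t$ need not have finite range. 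The paper's fix (Lemma~\ref{l:existence_wtilde}) is to take $\varphi=|\kappa|^2$ with $\supp\widehat\kappa\subset[-\tfrac12,\tfrac12]$, so that $\tilde w_t(\lambda)=\sqrt{c_0}\,\varphi(\lambda^{\gamma/2}t)$ itself satisfies both \eqref{e:identity_tildelambda} and the support condition. This is a small but essential modification that your proposal does not supply. Your decay argument is also only heuristic: deducing $K_s(0)\asymp s^{-\boldsymbol\alpha-1}$ from $A^{-1}(x)\asymp|x|^{-\boldsymbol\alpha}$ and the support of $K_s$ requires a genuine upper bound on $K_s(0)$, which in the paper comes from the heat-kernel estimate \eqref{e:H} via \eqref{e:estimate_kernel2}.

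The discrete case has a more serious gap. Your plan is to factor, at each scale, a non-negative trigonometric polynomial on $\T^d$ as $|h(\xi)|^2$ with $h$ of comparable degree (the Paley--Wiener reformulation you give is not quite right on $\Z^d$: the support condition translates to $\widehat\q(\cdot,t)$ being a trigonometric polynomial, not an entire function of exponential type). For $d\ge2$ such a factorisation is impossible in general; this is the Calder\'on--Pepinsky/Rudin obstruction discussed in Section~\ref{s:discussion}. P\'olya--Szeg\H{o} is a \emph{univariate} result and cannot be applied to $\xi\in\T^d$. The paper circumvents this by working in the single spectral variable $\lambda\in[0,B]$, where Bauerschmidt's $w_t(\lambda)$ is a one-variable polynomial, and applying Lemma~\ref{l:polyaszego} to $w_t((2B)^\gamma-\cdot)$. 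This produces not a single square but the representation \eqref{e:existence_aj2}, whose extra factor $(2B)^\gamma-\lambda^\gamma$ is absorbed by an explicit finite-range factorisation $(2B)^\gamma\Id-L^\gamma=R^*R$ with $R$ taking values in $L^2(X,\R^{d+1})$. The resulting $Q_t$ is therefore \emph{vector-valued} in $\R^{2d+4}$, and a final cycling trick (letting $\q(\cdot,t)$ run through the $2d+4$ components as $t$ moves through successive unit intervals) converts it to the scalar $\q$ demanded by Definition~\ref{d:Falpha}. Your proposal contains none of these three ingredients---the spectral-variable reduction to one dimension, the $R^*R$ factorisation of the linear remainder, and the vector-to-scalar cycling---and without them the discrete argument does not close.
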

In particular, Muirhead's result applies to all these fields and allows to conclude that the phase transition is sharp. For the two continous field, this sharpness is a new result.

Our results are actually far more general than Theorem \ref{t:fields_in_Falpha}. In fact we provide a general framework that allows to find white noise decompositions as in Definition \ref{d:Falpha} for wide classes of Gaussian fields (see the discussion in Section \ref{s:discussion} for further examples). Before introducing this framework and motivating our proof, let us discuss finite-range decompositions more generally.

\subsection{Finite range decompositions}

\paragraph{Context and previous results}
A Gaussian field $f$ on a metric measure space $(X,d,\mu)$ (where in our setting either $X=\Z^d$ or $X=\R^d$) corresponds to a positive semidefinite quadratic form $\Phi(u,v)=\Cov\left((u,f)_{L^2(X)},(v,f)_{L^2(X)}\right)$ for $u,v\in C_c(X)$, say. Often this quadratic form has infinite range, and this is a serious obstacle to the mathematical analysis of such Gaussian fields. In particular, in mathematical implementations of the renormalization group method, one typically attempts to analyze the field scale by scale. For that purpose one needs a decomposition of $\Phi$ as a sum (or integral) of positive quadratic forms $\Phi_t$, where $t>0$ is a scale parameter. This decomposition then allows to write the field $f$ as a sum (or integral) of independent Gaussian fields $f_t$, which one can then analyze sequentially.

Of course, the $\Phi_t$ should be as simple as possible, and in fact it is often advantageous if the $\Phi_t$ have finite range, that is if $\Phi_t(u,v)=0$ when $\dista(\supp u,\supp v)>\theta(t)$ for a function $\theta\colon\R^+\to\R^+$.

The desire to use rigorous versions of the renormalization group method for random fields has led to the development of various approaches to find finite-range decompositions for given quadratic forms \cite{BGM04,BT06,AKM13,B13,B18}. Here the former three works construct finite range decompositions by averaging projections over well-chosen subspaces. The latter two works use a very different approach, introduced by Bauerschmidt: They define the decomposition via functional calculus and rely on the finite speed of propagation of the wave equation to ensure the finite range property.

In the mentioned works, the goal was to find a decomposition
\begin{equation}\label{e:finite_range}
\Phi(u,v)=\int_0^\infty\Phi_t(u,v)\ud t
\end{equation}
where the $\Phi_t$ are positive semidefinite and have finite range. 
As will become clear in a second, Muirhead's white noise decomposition requires more. Namely the Gaussian field $\q*_1 W$ has covariance
\begin{align*}\Cov\left((u,\q*_1 W)_{L^2(X)},(v,\q*_1 W)_{L^2(X)}\right)&=\int_{\R^d\times\R^+}\int_{\R^d}\int_{\R^d}u(x)\q(x-z,t)\q(z-y,t)v(y)\ud z\ud x\ud y\ud t\\
&=\int_0^\infty ( Q_t(u), Q_t(v))_{L^2(X)}\ud t
\end{align*}
with the operator $Q_t\colon D(Q)\subset D(Q_t)\to L^2(X)$ on $L^2(X)$ defined by
\[Q_t(u):=\q(\cdot,t)*_1 u=\int_{\R^d}\q(\cdot-x,t)u(x)\ud x.\]
So if a Gaussian field $f$ is in $\F$, then its covariance $\Phi(u,v)=\Cov\left((u,f)_{L^2(X)},(v,f)_{L^2(X)}\right)$ has a decomposition
\begin{equation}\label{e:finite_range_new}
\Phi(u,v)=\int_0^\infty (Q_t(u),Q_t(v))_{L^2(X)}\ud t
\end{equation}
where now the map $Q_t$ has finite range $\frac{t}{2}$ in the sense that $\supp Q_t(u)\subset N_{t/2}(\supp u)$ (where $N_r(A)=\{x\in X\colon\dista(x,A)<r\}$ is the $r$-neighborhood of $A\subset X$).

From the white noise decomposition \eqref{e:finite_range_new}, one can easily recover the classical decomposition \eqref{e:finite_range} by choosing $\Phi_t(u,v):=(Q_t(u),Q_t(v))_{L^2(X)}$. The converse need not hold, as in general, given a positive symmetric finite-range quadratic form $\Phi_t$ it is not clear whether one can represent it as $(\tilde Q_t(\cdot),\tilde Q_t(\cdot))_{L^2(X)}$ for some finite-range operator $\tilde Q_t$ (cf. Section \ref{s:discussion} for more on this).
Thus Muirhead's requirements on a white noise decomposition are genuinely stronger than those in the classical finite-range decompositions in the literature.

Decompositions like \eqref{e:finite_range_new} have appeared in \cite{HS02} (extending earlier work in \cite{FL86}), where it was shown that for each radial kernel on $\R^d$ such a decomposition exists when one additionally introduces a (possibly negative) weight, and there are sufficient conditions when that weight is non-negative. From this result one can deduce the existence of a white noise decomposition as in Definition \ref{d:Falpha} in some special cases (cf. Section \ref{s:discussion}).

\paragraph{Muirhead's approach}

In his proof that the discrete Gaussian free field and the membrane model are in $\F$, Muirhead relies on an argument from \cite{DGRS20} that uses the heat kernel to construct a white noise decomposition. Let us briefly sketch that argument in the case of the discrete Gaussian free field. There the Green's function $G(x):=G(0,x):=\Cov(f(0),f(y))$ can be written as $G(x)=\sum_{n=0}^\infty\PP(X^{\Z^d}_n=y)$, where $X_n^{\Z^d}$ is a simple random walk on $\Z^d$ starting at $0$. One now rewrites the right-hand side using simple random walk on the graph $\mathbb{M}^d$ that arises from $\Z^d$ by adding a vertex at the midpoint of each (nearest-neighbor) edge of $\Z^d$, and finds that
\[G(x)=\frac12\sum_{n=0}^\infty\PP(X^{\mathbb{M}^d}_n=y)\PP(X^{\mathbb{M}^d}_n=x-y).\]
This is a representation of $G$ as the sum over $n$ of self-convolutions of some functions. From it we quickly obtain a decomposition like \eqref{e:finite_range_new} (with a sum over $n$ instead of an integral over $t$). This decomposition has the finite-range property because the random walk $X^{\mathbb{M}^d}$ can move at most $n$ steps in time $n$. Using this, one can verify that the discrete Gaussian field is indeed in $\F_{(d-2)/2}$ (see \cite[Section 2.3.1]{M22} for details).

However, it is also easy to see the short-coming of this argument: The finite range property was deduced from the fact that simple random walk in discrete time stays in a set of diameter $n$ after $n$ steps. However, with high probability the walk will have moved only distance $C\sqrt{n}$ after $n$ steps. The loss of a factor 2 in the exponent here is the reason why the argument only shows that the field is in $\F_{(d-2)/2}$ and not, as expected, in $\F_{d-2}$.
Moreover, the approach breaks down completely when applied to the mollified continuous Gaussian free field, as continuous-time simple random walk is not deterministically bounded at any positive time.

\paragraph{Our approach}

We will use a different approach in the following. Namely we will modify Bauerschmidt's finite-range decomposition \cite{B13} so that it yields existence not just of a finite-range $\Phi_t$, but also existence of finite-range $Q_t$, at least if we make some small additional assumptions in some cases. Let us very briefly explain Bauerschmidt's approach and our new ideas, focusing on the case of the (discrete or continuous) Gaussian free field for simplicity. Let $\Deltad$ and $\Delta$ denote the discrete or continuous Laplacian, respectively, and let $L=-\Deltad$ or $L=-\Delta$.

Bauerschmidt constructs his decomposition by defining a family of non-negative functions $w_t\in C^\infty(I)$ for some interval $I$ containing the spectrum of $L$ such that
\begin{equation}\label{e:identity_lambda}
\frac{1}{\lambda}=\int_0^\infty tw_t(\lambda)\ud t.
\end{equation}
These functions give rise to a decomposition
\[(u,L^{-1}v)_{L^2(X)}=\int_0^\infty (u,tw_t(L)v)_{L^2(X)}\]
where $w_t(L)$ is defined via the spectral theorem and functional calculus. Then the quadratic form $\Phi_t(u,v):=(u,tw_t(L)v)_{L^2(X)}$ is positive semidefinite symmetric, and to obtain the decomposition \eqref{e:finite_range} it remains to ensure that it has finite range. Here Bauerschmidt's ingenuous idea is to use the finite speed of propagation of the (discrete or continuous) wave equation and to pick the $w_t$ in such a way that $w_t(L)$ is a suitable combination of the solution operators of the wave equation.

Now, the basic idea of our construction of a white noise decomposition is to choose functions $\tilde w_t\in C^\infty(I)$ (not necessarily non-negative) such that
\begin{equation}\label{e:identity_tildelambda}
\frac{1}{\lambda}=\int_0^\infty t|\tilde w_t(\lambda)|^2\ud t.
\end{equation}
Then we have
\[(u,L^{-1}v)_{L^2(X)}=\int_0^\infty (\sqrt{t}\tilde w_t(L)u,\sqrt{t}\tilde w_t(L))_{L^2(X)}\]
and so we can set $Q_t(u)=\sqrt{t}\tilde w_t(L)u$, provided that we can pick $\tilde w_t$ in such a way that $Q_t$ has finite range. It turns out that in the continuous case (i.e. if $X=\R^d$ and $L=-\Delta$) a very minor modification of Bauerschmidt's $w_t$ already has the required properties. 

The situation is less easy in the discrete case (i.e. if $X=\Z^d$). In that case we have been unable to construct $\tilde w_t$ in such a way that \eqref{e:identity_tildelambda} holds and $\tilde w_t$ has finite range (see the beginning of Section \ref{s:exPstar} for an explanation where the problem is). 

One way to solve this problem is to look for a more general white noise decomposition. Namely, instead of \eqref{e:finite_range_new}, we allow the $Q_t$ to take values in a separable Hilbert space $\HH$. That is, we consider operators $Q_t\colon D(Q)\subset D(Q_t)\to L^2(X,\HH)$ that satisfy
\begin{equation}\label{e:finite_range_new2}
\Phi(u,v)=\int_0^\infty (Q_t(u),Q_t(v))_{L^2(X,\HH)}\ud t
\end{equation}
and have finite range $\frac{t}{2}$. 
Now at first glance it might not be clear how to recover a white noise decomposition as in Definition \ref{d:Falpha} from this. However, there is an easy trick that solves this problem. Namely, in our applications $\HH$ will be equal to $\R^K$ for some $K\in\N$, and we will accommodate the extra $K$ dimensions in the codomain of $Q_t$ by letting $\q(\cdot,t)$ cycle through the $K$ components of $Q_t$.

The point now is that \eqref{e:finite_range_new2} can be obtained by writing $w_t(\lambda)$ in \eqref{e:identity_lambda} as a sum of several squares (not just the one square $|\tilde w_t(\lambda)|^2$ in \eqref{e:identity_tildelambda}). 
So instead of a representation as in \eqref{e:identity_tildelambda} with one square, we look for representations with a fixed bounded number of squares.
To find it, we go back to Bauerschmidt's construction of $w_t$ in \eqref{e:identity_lambda}. In the discrete setting, the constructed $w_t$ is a polynomial in $\lambda$ of degree at most $t$, and ideally we would want this polynomial to be a sum of squares of polynomials. This is not quite possible, as $w_t(\lambda)$ in general takes negative values, and so it cannot be a sum of squares.

However, we can arrange things so that $w_t$ is non-negative for $\lambda\in(-\infty,2B]$ (where $B$ is a constant such that the spectrum of $-\Deltad$ is contained in $[0,B]$. Now by a classical result of P\'olya and Szeg\H{o} \cite{PS98}, there are real polynomials $a_t^{(j)}$ for $j\in\{1,2,3,4\}$ such that \[
w_t(\lambda)=(a_t^{(1)}(\lambda))^2+(a_t^{(2)}(\lambda))^2+(2B-\lambda)\left((a_t^{(3)}(\lambda))^2+(a_t^{(4)}(\lambda))^2\right).\]
This is almost a representation as sums of squares, just the factor $2B-\lambda$ is problematic. However, if $B\ge4d$, then one can easily check that the operator $2B \Id+\Deltad$ can be written as $R^*R$, where $R\colon L^2(X)\to L^2(X,\R^{d+1})$ is given by $(Ru(x))_1=\sqrt{2B-4d}\,u(x)$ and $(Ru(x))_j=u(x+e_{j-1})+u(x)$ for $2\le j\le d+1$. So, while we can not make the factor $2B-\lambda$ be a square, we can nonetheless represent $2B\Id+\Deltad$ as the product of a finite-range operator with its adjoint, and this is good enough for our purposes.
We can now rewrite \eqref{e:identity_lambda} as
\begin{align*}
&(u,(-\Deltad)^{-1}v)_{L^2(X)}\\
&=\int_0^\infty t\sum_{j=1}^2\left( u,(a_t^{(j)}(-\Deltad))^2v\right)_{L^2(X)}+\sum_{j=3}^4t\left( u,(2B\Id+\Deltad)(a_t^{(j)}(-\Deltad))^2v\right)_{L^2(X)}\ud t\\
&=\int_0^\infty \sum_{j=1}^2\left(\sqrt{t} a_t^{(j)}(-\Deltad)u,\sqrt{t} a_t^{(j)}(-\Deltad)v\right)_{L^2(X)}\\
&\qquad+\sum_{j=3}^4\left(\sqrt{t} R a_t^{(j)}(-\Deltad)u,\sqrt{t} R a_t^{(j)}(-\Deltad)v\right)_{L^2(X,\R^{d+1})}\ud t
\end{align*}
where we used that $R^*R$ and $-\Deltad$ commute. The integrand here is a sum of $1+1+(d+1)+(d+1)=2d+4$ scalar products in $L^2(X)$, and so it directly leads to a decomposition as in \eqref{e:finite_range_new2} with $\HH=\R^{2d+4}$.

If we go beyond the special case of the discrete Laplacian, the approach that we just sketched continues to apply. In fact, our construction of a white noise decomposition holds in a general setting very similar to the one in \cite{B13}. The main change is that, given a generator $L$, in the discrete case we assume a priori the existence of $B>0$ and $R\colon L^2(X)\to L^2(X,\HH_0)$ for some Hilbert space $\HH_0$ such that we can factorize $2B\Id-L=R^*R$. Under this assumption we then obtain a finite-range decomposition as in \eqref{e:finite_range_new2} with $\HH=\R^2\times \HH_0^2$.

\subsection{Main result}
In order to state our main result on finite-range decompositions in detail, we need to introduce some definitions. We work in the same setting as in \cite{B13}, and so we begin by reviewing it.

We let $(X,d,\mu)$ be a metric measure space, and let $E\colon D(E)\times D(E)\to\R$ be a regular closed symmetric form on $L^2(X)$ and $L\colon D(L)\to L^2(X)$ be its generator. $L$ is self-adjoint, and by the spectral theorem there exists a projection-valued spectral measure $P$ so that $L=\int_0^\infty\lambda\ud P_\lambda$, and for each Borel-measurable $F\colon[0,\infty)\to\R$ we can define the self-adjoint operator $F(L)=\int_0^\infty F(\lambda)\ud P_\lambda$.

Note that we can then write \[E(u,v)=(L^{1/2}u,L^{1/2}v)_{L^2(X)}\] for $u,v\in D(E)=D(L^{1/2})$. We can also easily define the dual form \[\Phi(u,v)=(L^{-1/2}u,L^{-1/2}v)_{L^2(X)}\] for $u,v\in D(\Phi)=D(L^{-1/2})$.

Similarly as in \cite{B13}, we consider the following two finite propagation speed conditions
\begin{equation}\tag{$P_{\gamma,\theta}$}\label{e:P}
\supp(\cos(L^{\gamma/2}t)u)\subset N_{\theta(t)}(\supp(u))\quad\forall u\in C_c(X), t>0
\end{equation}
and
\begin{equation}\tag{$P^*_{\gamma,B,\theta}$}\label{e:Pstar}
\begin{split}
E(u,u)\le B\|u\|_{L^2(X)}\quad\forall u\in L^2(X),\\
\supp(L^{\gamma n} u)\subset N_{\theta(n)}(\supp(u))\quad\forall u\in C_c(X), n\in\N
\end{split}
\end{equation}
for $\gamma>0$, $B>0$ and $\theta\colon\R^+\to\R^+$, and we remind that $N_r(A)=\{x\in X\colon\dista(x,A)<r\}$. The former condition is taken directly from \cite{B13}, the latter appears there for $\gamma=1$ only.

We also consider a further assumption as in \cite{B13}, namely a heat kernel bound. To state it, let $p_t$ be the kernel of the semigroup $(\e^{-tL})_{t>0}$. We consider the condition
\begin{equation}\tag{$H_{\alpha,\omega}$}\label{e:H}
p_t(x,x)\le \frac{\omega(x)}{t^{\alpha/2}}\quad\forall x\in X,t>0
\end{equation}
for $\alpha>0$ and $\omega\colon X\to\R^+$ bounded (and we implicitly assume the existence of the continuous kernel $p_t$).

Finally we consider two new assumptions. The first one states that $(2B)^\gamma\Id-L^\gamma$ can be factorized into the product of some finite-range linear operator with its adjoint. That is, we consider the condition \begin{equation}\tag{$F_{\gamma,B,\theta_R,R,\HH_0}$}\label{e:F}
\begin{split}
(2B)^\gamma(u,v)_{L^2(X)}-(u,L^\gamma v)=(Ru,Rv)_{L^2(X,\HH_0)}\quad\forall u,v\in L^2(X)\\\supp(Ru)\subset N_{\theta_R}(\supp u)\quad\forall u\in C_c(X)
\end{split}
\end{equation}
for some Hilbert space $\HH_0$, some $R\colon L^2(X)\to L^2(X,\HH_0)$ and some $\theta_R>0$. The second one is a regularity assumption for $R$, namely that there is some $l_R\ge0$ such that the operator $R(1+L)^{-l_R}$ is given by convolution with a continuous kernel $r\colon X\times X\to\HH_0$ where
\begin{equation}\tag{$H^*_{l_R,R,\HH_0}$}\label{e:R}
x\mapsto r(x,\cdot)\in C_b(X,L^1(X,\HH_0)).
\end{equation}

We postpone a discussion of these assumptions to the next section. For now, we discuss what can be proven assuming them. The main result of \cite{B13} is that under assumption \eqref{e:P} or \eqref{e:Pstar} there is a finite range decomposition as in \eqref{e:finite_range} with $L^2$-bounds on $\Phi_t$, and pointwise bounds when one additionally assumes \eqref{e:H}. We generalize his result in the following ways: Our first result is that under the same assumption \eqref{e:P} we even have a white noise decomposition as in \eqref{e:finite_range_new2} with $\HH=\R$. Our second result is that if we assume \eqref{e:Pstar} and additionally \eqref{e:F}, then there is a white noise decomposition as in \eqref{e:finite_range_new2} where the operators $Q_t$ take values in $\HH:=\R^2\times\HH_0^2$. Note that Bauerschmidt's result in the discrete case is restricted to $\gamma=1$, while we allow general $\gamma$ such that $\frac{1}{\gamma}\in\N$.
In both cases, we have $L^2$-bounds on $Q_t$ and pointwise bounds if we additionally assume \eqref{e:H} and \eqref{e:R}.

\begin{theorem}\label{t:mainthm}
Let $(X,d,\mu)$ be a metric measure space, $E$ be a regular closed symmetric positive semidefinite form on $L^2(X)$ and $L$ be its generator. 
\begin{itemize}
\item[(i)] Suppose that $\gamma>0$ and that assumption \eqref{e:P} holds. Then there exist a family of linear maps $Q_t\colon L^2(X)\to L^2(X)$ indexed by $t\in\R^+$ such that $Q_t$ has range at most $\theta(t)$, the operator norm of $Q_t$ is bounded by $C_{\gamma} t^{(2-\gamma)/(2\gamma)}$, and such that we have the white noise decomposition
\begin{equation}\label{e:finite_range_mainthm}
\Phi(u,v)=\int_0^\infty (Q_t(u),Q_t(v))_{L^2(X)}\ud t.
\end{equation}
Moreover, if we additionally assume \eqref{e:H} then $Q_t$ is given by convolution with a continuous kernel $q_t\colon X\times X$ that satisfies the bound
\begin{align}
|q_t(x,y)|&\le \frac{C_{\alpha,\gamma}\sqrt{\omega(x)\omega(y)}}{t^{(2\alpha+\gamma-2)/(2\gamma)}},\label{e:estimate_kernel}\\
\|q_t(x,\cdot)\|_{L^2(X)}&\le \frac{C_{\alpha,\gamma}\sqrt{\omega(x)}}{t^{(\alpha+\gamma-2)/(2\gamma)}}.\label{e:estimate_kernel2}
\end{align}
\item[(ii)] Suppose instead that $\frac1\gamma\in\N$ and that assumptions \eqref{e:Pstar} and \eqref{e:F} both hold. Let $\HH=\R^2\times\HH_0^2$.
Then there exist a family of linear maps $Q_t\colon L^2(X)\to L^2(X,\HH)$ indexed by $t\in\R^+$ such that $Q_t$ has range at most $\max(\theta(t),\theta(t-1)+\theta_R)$, the operator norm of $Q_t$ is bounded by $C_{\gamma,B} t^{(2-\gamma)/(2\gamma)}$, and such that we have the finite-range decomposition
\begin{equation}\label{e:finite_range_mainthmH}
\Phi(u,v)=\int_0^\infty (Q_t(u),Q_t(v))_{L^2(X,\HH)}\ud t.
\end{equation}
Moreover, if we additionally assume \eqref{e:H} and \eqref{e:R}, then $Q_t$ is given by convolution with a continuous kernel $q_t\colon X\times X\to\HH$ that satisfies the bounds
\begin{align}
|q_t(x,y)|_{\HH}&\le \frac{C_{\alpha,\gamma,B,l_R,R}\sup_z\omega(z)}{t^{(2\alpha+\gamma-2)/(2\gamma)}},\label{e:estimate_kernelH}\\
\|q_t(x,\cdot)\|_{L^2(X,\HH)}&\le \frac{C_{\alpha,\gamma,B}\sqrt{\omega(x)}}{t^{(\alpha+\gamma-2)/(2\gamma)}}\label{e:estimate_kernelH2}
\end{align}
for $t\ge1$, while for $t<1$ we have the explicit formula
\begin{equation}\label{e:formula_kernelH}
q_t(x,y)=F_{\gamma,B}(t)t^{(1-\gamma)/\gamma}\I_{x=y}e_1
\end{equation}
for some bounded function $F_{\gamma,B}\colon[0,1]\to[0,\infty)$, where $e_1=(1,0,0,0)\in\HH=\R^2\times\HH_0^2$.
\end{itemize}
\end{theorem}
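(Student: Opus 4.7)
The plan is to construct the operators $Q_t$ by functional calculus, starting from a Bauerschmidt-type representation
\[
\frac{1}{\lambda}=\int_0^\infty tw_t(\lambda)\ud t
\]
with $w_t\ge 0$ on the spectrum of $L$, and then factorizing each integrand $tw_t(L)$ as a sum of $(Q_t^{(j)})^*Q_t^{(j)}$ with each $Q_t^{(j)}$ inheriting finite range from the wave equation propagator (for part (i)) or from assumption \eqref{e:F} (for part (ii)).

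For part (i), I arrange $w_t$ to be a perfect square, by a minor modification of Bauerschmidt's construction. Concretely, set
\[
\tilde w_t(\lambda):=\int_\R\hat\chi_t(s)\cos(\lambda^{\gamma/2}s)\ud s
\]
for a smooth even $\hat\chi_t$ supported in $[-s_*(t),s_*(t)]$ with $\theta(s_*(t))\le\theta(t)$, normalized so that $\int_0^\infty t\,\tilde w_t(\lambda)^2\ud t=\lambda^{-1}$. By the spectral theorem $\tilde w_t(L)=\int_\R\hat\chi_t(s)\cos(L^{\gamma/2}s)\ud s$, and \eqref{e:P} directly yields that $\tilde w_t(L)$ has range at most $\theta(t)$. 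Setting $Q_tu:=\sqrt t\,\tilde w_t(L)u$ gives \eqref{e:finite_range_mainthm}. The norm bound reduces to $\sup_\lambda\sqrt{tw_t(\lambda)}\le C_\gamma t^{(2-\gamma)/(2\gamma)}$, which follows from the explicit formula for $\tilde w_t$, and the kernel bounds under \eqref{e:H} follow by inserting an artificial smoothing $\e^{-sL}$ at scale $s\sim t^{2/\gamma}$ and using Cauchy--Schwarz against the heat kernel.

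For part (ii), $w_t$ is a polynomial of degree $\sim t$ in $\lambda^\gamma$ that typically changes sign, so it cannot itself be a single square. The new idea is to arrange the construction so that $w_t\ge 0$ on $(-\infty,(2B)^\gamma]$, a superset of the spectrum of $L^\gamma$ by the bound $E(u,u)\le B\|u\|_{L^2(X)}^2$ in \eqref{e:Pstar}. The Pólya--Szegő theorem then furnishes real polynomials $a_t^{(j)}$ with
\[
w_t(\lambda)=(a_t^{(1)}(\lambda))^2+(a_t^{(2)}(\lambda))^2+\bigl((2B)^\gamma-\lambda^\gamma\bigr)\bigl((a_t^{(3)}(\lambda))^2+(a_t^{(4)}(\lambda))^2\bigr),
\]
and the factorization $(2B)^\gamma\Id-L^\gamma=R^*R$ from \eqref{e:F} absorbs the problematic factor. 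Setting
\[
Q_tu:=\sqrt t\bigl(a_t^{(1)}(L)u,\,a_t^{(2)}(L)u,\,Ra_t^{(3)}(L)u,\,Ra_t^{(4)}(L)u\bigr)\in L^2(X,\R^2\times\HH_0^2)
\]
and using that $R^*R$ commutes with $L^\gamma$ yields \eqref{e:finite_range_mainthmH}. The finite range of each $a_t^{(j)}(L)$ follows from the support-spreading bound in \eqref{e:Pstar} applied to its expansion as a polynomial in $L^\gamma$, to which the $\theta_R$-spread of $R$ is added in the last two components.

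The main obstacle is twofold: (a) designing $w_t$ so that non-negativity on $(-\infty,(2B)^\gamma]$ holds while preserving $\int_0^\infty tw_t(\lambda)\ud t=\lambda^{-1}$ together with the required decay properties; and (b) transferring norm and kernel bounds from the explicit $w_t$ to the implicit Pólya--Szegő polynomials $a_t^{(j)}$, for which no closed form is available. For (b) I plan to use the pointwise spectral bounds $\sum_{j=1}^2(a_t^{(j)}(\lambda))^2\le w_t(\lambda)$ and $\sum_{j=3}^4(a_t^{(j)}(\lambda))^2\le w_t(\lambda)/((2B)^\gamma-\lambda^\gamma)$ on the spectrum $[0,B^\gamma]\subset[0,(2B)^\gamma)$, transferred to operator estimates via the spectral theorem and combined with \eqref{e:H} and \eqref{e:R} for the kernel bounds \eqref{e:estimate_kernelH}--\eqref{e:estimate_kernelH2}. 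The short-time regime $t<1$, where $w_t$ reduces to a degree-zero polynomial and the $R$-terms vanish, produces the explicit formula \eqref{e:formula_kernelH} directly.
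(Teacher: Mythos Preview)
Your proposal is correct and follows essentially the same route as the paper: for (i) you make Bauerschmidt's $w_t$ a perfect square by choosing the bump function as $\varphi=|\kappa|^2$ with $\hat\kappa\ge0$ compactly supported, and for (ii) you combine the P\'olya--Szeg\H{o} representation with the factorization $(2B)^\gamma\Id-L^\gamma=R^*R$ from \eqref{e:F}, then bound the implicit polynomials $a_t^{(j)}$ via $|a_t^{(j)}(\lambda)|^2\le C_B\,w_t(\lambda)$ on $[0,B]$.

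Two technical points you should be aware of before filling in the details. First, your obstacle (a) is resolved in the paper not by modifying Bauerschmidt's $w_t$ but by observing that it already has the required sign: rewritten via Poisson summation as a sum $\sum_k \widehat{\varphi^2}(k/t)\,T_k(1-\mu/(2B)^\gamma)$ in Chebyshev polynomials, non-negativity for $\mu\le0$ follows because $T_k(x)\ge0$ for $x\ge1$ \emph{and} because the specific choice $\varphi=|\kappa|^2$ forces $\widehat{\varphi^2}\ge0$; this last positivity is not automatic and is the reason the same bump is reused from part (i). Second, you have not addressed the $t$-dependence of the P\'olya--Szeg\H{o} factors $a_t^{(j)}$: for the integral \eqref{e:finite_range_mainthmH} and for the kernel $q_t$ to be well-defined you need at least measurable selection, and the paper in fact proves continuous dependence on $t$ via a careful tracking of the root factorization of $w_t$ (this is its Lemma~\ref{l:polyaszego_cont}, which requires a small modification of the textbook P\'olya--Szeg\H{o} argument to remain stable when complex-conjugate roots collide on the real axis).
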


\subsection{Further discussion}\label{s:discussion}
\paragraph{Comparison to earlier works}
Let us compare Theorem \ref{t:mainthm} with the results in \cite{B13,HS02}, beginning with the former. In the continuous case (part (i)) our assumptions are the same as in \cite[Theorem 1.1]{B13}. If in the setting of our theorem we define $\Phi_t(u,v):=(Q_t(u),Q_t(v))_{L^2(X,\HH)}$, then the $\Phi_t$ have range $2\theta(t)$ and satisfy the other assumptions of \cite[Theorem 1.1]{B13}. Thus, apart from the minor detail that the range is now $2\theta(t)$ instead of $\theta(t)$, our result implies Bauerschmidt's in this case.

In the discrete case (part (ii)) we need to make the assumptions \eqref{e:F} and \eqref{e:R} that are not needed in \cite[Theorem 1.1]{B13}. Under these stronger assumptions, our result again implies a version of Bauerschmidt's. Let us emphasize, though, that we allow arbitrary $\gamma$ with $\frac{1}{\gamma}\in\N$, not just $\gamma=1$. The argument that we use to cover the case $\frac{1}{\gamma}\in\{2,3,\ldots\}$, however, could probably also be applied in Bauerschmidt's setting, so that with some additional work one should be able to extend \cite[Theorem 1.1]{B13} also to this case.

Some special cases of part (i) of Theorem \ref{t:mainthm} can also be deduced from \cite[Theorem 1]{HS02}. There for radially symmetric translation-invariant covariance kernels on $\R^d$ under weak assumptions a decomposition like \eqref{e:finite_range_mainthm}, but with an additional radial weight is constructed. Whenever that weight is non-negative this yields a white noise decomposition as in \eqref{e:finite_range_mainthm} itself. Now, as remarked in \cite[Example 3]{HS02} the radial weight need not be non-negative even if the kernel is positive semidefinite. However, there is an explicit formula for the radial weight that allows to verify that for power-law kernels the radial weight is non-negative. Thus, for the Laplacian on $\R^d$, for example, Theorem \ref{t:mainthm} already follows from the results in \cite{HS02}. Nonetheless, our result is clearly far more general.

\paragraph{Discussion of the assumptions and further examples}
In setting (i), our assumptions \eqref{e:P} and \eqref{e:H} are the same as in \cite{B13}. As discussed there, the Dirichlet form associated with any even-order uniformly elliptic differential operator on $\R^d$ with possibly variable coefficients satisfies the assumptions. The same holds true for (possibly fractional) powers $L^s$ of such operators (by choosing $\gamma=\frac{1}{s}$). This means that Theorem \ref{t:fields_in_Falpha} applies to all these operators. By similar arguments as in the proof of Theorem \ref{t:fields_in_Falpha}, the corresponding Gaussian fields (suitably mollified) are in $\F$. For example, consider the fractional Gaussian fields with Hamiltonian $(u,(-\Delta)^su)$ for $s>0$ (see \cite{LSSW16} for background). Along the lines of the proof of Theorem \ref{t:fields_in_Falpha} one can also show that in the supercritical regime $s<\frac{d}{2}$ these fields, suitably mollified, are in $\F_{d-2s}$. 

In setting (ii), the assumptions \eqref{e:Pstar} and \eqref{e:H} are like the one in \cite{B13}. As discussed in \cite{B13}, every even-order finite-difference operator on $\Z^d$ satisfies them. As we have the freedom to choose $\gamma$ subject to $\frac{1}{\gamma}\in\N$, we can also take integer powers of such operators while maintaining the optimal decay rates.

Importantly, however, in setting (ii) we need the additional assumptions \eqref{e:F} and \eqref{e:R}. These assumptions are fairly mild, though. Indeed, for example, any operator of the form $Lu(x)=\sum_{y\in\Z^d}\omega_{xy}u(y)$, where we only require that $\omega_{xy}=\omega_{yx}$, $\omega_{xy}=0$ if $|x-y|\ge \Theta$ for some $\Theta\in\N$ and $\sup_{x,y\in\Z^d}|\omega_{xy}|<\infty$, satisfies \eqref{e:F} and \eqref{e:R} for $\gamma=1$, and $\HH_0=\R^K$ for some sufficiently large $K$. To see this, let $S=\sup_{x,y\in\Z^d}|\omega_{xy}|<\infty$, and take $B=(2\Theta+1)^dS$. By polarization, it suffices to check \eqref{e:F} for $u=v$. We can now write
\begin{align*}
&2B(u,u)_{L^2(X)}-E(u,u)\\
&=2B\sum_{x\in\Z^d}u(x)^2-\sum_{x\in\Z^d}\sum_{y\in\Z^d}\omega_{xy}u(x)u(y)\\
&=\sum_{x\in\Z^d}\left(\left(2(2\Theta+1)^dS-\sum_{y\in\Z^d}|\omega_{xy}|\right)u(x)^2+\sum_{y\in\Z^d}|\omega_{xy}|\left(\frac{u(x)^2+u(y)^2}{2}-\sgn(\omega_{xy})u(x)u(y)\right)\right)\\
&=\sum_{x\in\Z^d}\left(\Gamma_{xy}u(x)^2+\sum_{y\in\Z^d}\frac{|\omega_{xy}|}{2}\left(u(x)-\sgn(\omega_{xy})u(y)\right)^2\right)
\end{align*}
where $\Gamma_{xy}:=2(2\Theta+1)^dS-\sum_{y\in\Z^d}|\omega_{xy}|\ge0$ by our assumptions on $\omega$.

From this formula we can read off that if we choose $\HH_0=\R^{1+(2d+1)^d}$ and
\[Ru(x)=\left(\sqrt{\Gamma_{xy}}u(x),\left(\sqrt{\frac{|\omega_{xy}|}{2}}\left(u(x)-\sgn(\omega_{xy})u(y)\right)\right)_{y\in x+[-\Theta,\Theta]^d\cap\Z^d}\right)\]
then assumption \eqref{e:F} is satisfied, and \eqref{e:R} holds as well (even with $l_R=0$).

Thus, assumptions \eqref{e:F} and \eqref{e:R} hold for many discrete operators $L$ (even many that are not positive semidefinite). In particular, all even order uniformly elliptic finite-difference operators with possibly variable, but bounded coefficients satisfy all assumptions of Theorem \ref{t:mainthm}. Again, following the lines of the proof of Theorem \ref{t:fields_in_Falpha}, one can show that the corresponding Gaussian fields in supercritial dimension are in $\F$. 

However, note that in the discrete setting we needed to assume $\frac{1}{\gamma}\in\N$ in Theorem \ref{t:mainthm}. The result might well be true without this assumption, but it is essential for our proof. So, currently we do not know whether discrete fractional Gaussian fields (as discussed in \cite{DNS22}) are in $\F$.
\paragraph{Possible extensions and variants}
The estimates in Theorem \ref{t:mainthm} should all have the optimal dependence on $t$. If $X=\R^d$ and $L$ is an even-order constant-coefficient elliptic differential operator, then in setting (i) our construction of $Q_t$ allows to straightforwardly obtain smoothness of $q_t(x,y)$ as well as quantitative estimates (with optimal decay rate) on its derivatives by arguing in Fourier space. We refer to \cite[Section 3.2]{B13} for a discussion of such estimates in this general setting.
The same is true in principle in setting (ii), however our approach currently only yields sub-optimal estimates on the derivatives of $q_t$ in this case. The technical reason for this is that our construction of the functions $a_t^{(j)}(\lambda)$ in Lemma \ref{l:polyaszego_cont} uses the result by P\'olya and Szeg\H o, which makes it quite non-explicit and in particular makes it difficult to obtain estimates for their derivatives in $\lambda$.
As is explained in Remark \ref{r:markov}, one can obtain non-optimal estimates for these derivatives using the Markov brothers' inequality \cite{MG16}, and this yields non-optimal estimates for the derivatives of $q_t$.

Let us point out that it is clear from our construction that $Q_t$ and $q_t$ inherit properties from $L$. In particular, if $L$ and $R$ are translation-invariant, then $Q_t$ and $q_t$ will be translation-invariant as well.

If $\HH=\R^K$ is finite-dimensional, then one can also obtain a version of \ref{t:mainthm} (ii) with a scalar-valued density $\tilde q_t$ instead of the vector-valued density $q_t$. Namely one can let $\tilde q_t$ cycle through the different components of $q_t$, for example by setting $\tilde q_t=Ke_1\cdot q_{Kt}$ on the interval $\left[0,\frac{1}{K}\right)$, $\tilde q_t=Ke_2\cdot q_{Kt-1}$ on the interval $\left[\frac{1}{K},\frac{2}{K}\right)$, and continuing cyclically in this manner. In fact, precisely this argument is used in the proof of Theorem \ref{t:fields_in_Falpha} below. 
In principle, a similar trick would also work if $\HH$ is infinite-dimensional with countable orthonormal basis. Then one would need to cycle through the components of $\Q$ with increasing speed (say, by spending time $\frac{1}{2^j}$ on the $j$-th component $e_j\cdot q_t$). However, this comes at the cost of deteriorated estimates on the decay of $\tilde q_t$. In our cases of interest, $\HH$ is finite-dimensional, and so we do not elaborate more on this.

Finally, let us point out that Muirhead's construction has the property that the resulting $\q$ is pointwise non-negative. This implies in particular that the Gaussian fields with covariance $\q(\cdot,t)*_1\q(\cdot,t)$ have non-negative correlations and hence satisfy the FKG property. With our construction (like with Bauerschmidt's) it seems that one cannot recover this non-negativity property. Namely even if $w_t$ is pointwise non-negative, this does not imply that $w_t(L)$ is pointwise non-negative (unless $L$ maps non-negative functions to non-negative functions, which is not true in the examples of interest).
Let us remark, though, that in the setting of Theorem \ref{t:mainthm} (i), the operators $Q_t$ themselves (and not just $Q_t^*Q_t$) happen to be self-adjoint and positive semidefinite, as follows from the fact that the functions $\tilde w_t$ in Lemma \ref{l:existence_wtilde} are non-negative. In the setting of Theorem \ref{t:mainthm} (ii) however, while one might consider the components of $Q_t$ with respect to a basis of $\HH$ as operators on $L^2(X)$, they are in general not even self-adjoint.

\paragraph{Possibility of an easier construction}

The reader might wonder whether any positive quadratic form $\Phi(u,v)$ with finite range $r$ on $L^2(X)$ can be written as $(Qu,Qv)_{L^2(X,\HH)}$ with some $\HH$ and some $Q\colon L^2(X)\to L^2(X,\HH)$ of range $r'$, where $\HH$ may depend on $X$ and $r$, but not on $\Phi$, and where $r'$ is as small as possible.

If this were possible with some fixed $\HH$ and $r'\le Cr$, then it would be straightforward to obtain a finite-range decomposition as in \eqref{e:finite_range_new2} from the ones like in \eqref{e:finite_range}, and so Theorem \ref{t:mainthm} would not be interesting.

If $d=1$, then this is trivially possible. Indeed, the Cholesky decomposition of a band matrix is banded with the same bandwidth, and so one can even take $\HH=\R$ and $r'=r$.

For $d\ge2$ there are positive results in some special cases: In \cite[Corollary 3.2]{EGR04} it is shown that if $X=\R^d$ and the kernel $\phi$ of $\Phi$ is radial and pointwise non-negative, then it can be written as $q*q$ for some radial kernel $q$, and the associated operator $Q$ then has the desired properties (with $\HH=\R$, even). Moreover, it follows from \cite[display (3)]{R70} that even without the assumption of pointwise non-negativity of $\phi$, one can write $\phi=\sum_k q_k*q_k$ for a finite or countable sequence $(q_k)$. This leads to a representation of $Q$ with values in $\HH=l^2(\N)$, say.

In general, however, we have a strong indication that for $d\ge2$ it is not possible to find $Q$ with the desired properties. Namely take $X=\Z^d$, and assume that $\Phi$ is translation-invariant. Then it is reasonable to assume that $Q$ should be translation-invariant as well, and that $\HH=\R^K$ for some $K$. Now a translation-invariant bilinear form is given by a multidimensional Toeplitz-matrix. The symbol of this matrix is then a polynomial in $d$ complex variables of degree $\le r$ in each variable, and being positive definite is equivalent to this symbol being positive on the complex torus $\{|z_1|=\ldots=|z_d|=1\}$, i.e. as a $d$-dimensional trigonometric polynomial. Existence of $Q$, on the other hand, reduces to writing this symbol as a sum of squares of $K$ trigonometric polynomials of degree at most $r'$ in each variable.

Now if $r'=r$, then this task is not possible in general for any $K$, as shown in \cite{CP52} and independently in \cite{R63}. This is closely related to Hilbert's 17th problem and in particular to the fact that there are polynomials in $\ge2$ variables that are non-negative, yet cannot be written as the sum of squares of polynomials.

If $r'>r$, there is more freedom to choose the polynomials, though. This topic is discussed in \cite{D04,GL06}. In those works it is shown that a general strictly positive trigonometric polynomial can be written as the sum of squares of trigonometric polynomials. However, already in case $d=2$ both the number of polynomials and their degree grow like $r^2$ (see \cite[Theorem 2.1]{GL06}). So in our setting one would need to take the dimension of $\HH$ and the range $r'$ to be both of order at least $r^2$. This is clearly not useful for our purposes.

There seems to be no proof that this quadratic growth is optimal. However, in light of these results, it seems unlikely that one can achieve $r'\le Cr$ for some fixed $C$, and so we believe that it is highly unlikely that there is a trivial way to obtain a finite-range decomposition as in \eqref{e:finite_range_new2} from the ones like in \eqref{e:finite_range}.

\section{Proofs}

\subsection{Existence of a white noise decomposition under \texorpdfstring{\eqref{e:P}}{(P)}}\label{s:exP}

As mentioned in the introduction, the proof of Theorem \ref{t:mainthm} under assumption \eqref{e:P} and under assumptions \eqref{e:Pstar} and \eqref{e:F} is quite different. The former requires just minor modifications to the corresponding argument in \cite{B13}, and so we begin with it.

The main step in the proof of Theorem \ref{t:mainthm} under assumption \eqref{e:P} is the verification of the following lemma, the analogue of \cite[Lemma 2.1]{B13}.

\begin{lemma}\label{l:existence_wtilde}
Assume that the regular closed symmetric positive semidefinite quadratic form $E$ and its generator $L$ satisfies \eqref{e:P}. Then there is a family of non-negative functions $\tilde w_t\in C^\infty(\R)$ that depend smoothly on $t$ and have the following properties:

We have that
\begin{equation}\label{e:existence_wtilde1}
\frac{1}{\lambda}=\int_0^\infty t^{(2-\gamma)/\gamma} |\tilde w_t(\lambda)|^2\ud t
\end{equation}
for all $\lambda\in \spec(L)\setminus\{0\}$.

Moreover, we have the bound
\begin{equation}\label{e:existence_wtilde2}(1+t^2\lambda)^l\lambda^m\left|\frac{\partial^m}{\partial\lambda^m}\tilde w_t(\lambda)\right|\le C_{l,m}
\end{equation}
for any $l,m\in\N$, any $\lambda\in \spec(L)\setminus\{0\}$, and any $t>0$.

Finally, we have that
\begin{equation}\label{e:existence_wtilde3}
\supp (\tilde w_t(L)u)\subset N_{\theta(t)}(\supp(u))
\end{equation}
for any $u\in C_c(X)$.
\end{lemma}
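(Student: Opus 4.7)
The plan is to mimic Bauerschmidt's construction in \cite[Lemma 2.1]{B13} with a small modification that guarantees the output is explicitly a square of a smooth non-negative function. The idea is to take $\tilde w_t(\lambda)$ to be $c_\gamma\phi(t\lambda^{\gamma/2})$ where $\phi$ is non-negative and is arranged, via the Fourier cosine transform, as the transform of a non-negative function with compact support in $[-1,1]$; then \eqref{e:P} will give the finite-range property \eqref{e:existence_wtilde3} after a simple rescaling.

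Concretely, I would fix an even, non-negative, smooth, compactly supported $\eta\in C_c^\infty(\R)$ with $\supp\eta\subset[-\tfrac12,\tfrac12]$ and $\eta\not\equiv 0$, set $g:=\eta*\eta$, and define
\[\phi(y):=\int_\R g(s)\cos(ys)\ud s=\hat\eta(y)^2\ge 0,\]
which is a non-negative Schwartz function with $\phi(0)>0$. I then set $\tilde w_t(\lambda):=c_\gamma\phi(t\lambda^{\gamma/2})$, where $c_\gamma:=\left(\int_0^\infty\tau^{(2-\gamma)/\gamma}\phi(\tau)^2\ud\tau\right)^{-1/2}$ is well-defined and positive because $(2-\gamma)/\gamma>-1$ and $\phi$ is Schwartz. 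The identity \eqref{e:existence_wtilde1} then reduces to the defining property of $c_\gamma$ after the substitution $\tau=t\lambda^{\gamma/2}$, while the finite-range bound \eqref{e:existence_wtilde3} follows from the spectral-calculus identity
\[\tilde w_t(L)=c_\gamma\int_{-1}^1 g(s)\cos(tL^{\gamma/2}s)\ud s=\frac{c_\gamma}{t}\int_{-t}^t g(s/t)\cos(L^{\gamma/2}s)\ud s\]
by applying \eqref{e:P} for each $|s|\le t$ (replacing $\theta$ by its non-decreasing envelope if needed). Non-negativity, smoothness, and smooth $t$-dependence are immediate from the definition.

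The main technical point I expect to work through is the derivative bound \eqref{e:existence_wtilde2}. By the chain rule, $\partial_\lambda^m\phi(t\lambda^{\gamma/2})$ is a finite sum of terms of the form $\phi^{(k)}(t\lambda^{\gamma/2})\cdot t^k\lambda^{k\gamma/2-m}$, and the rapid decay of each $\phi^{(k)}$ lets one absorb any polynomial factor in $t\lambda^{\gamma/2}$. This naturally produces bounds in $(1+(t\lambda^{\gamma/2})^2)^l=(1+t^2\lambda^\gamma)^l$, which coincides with the stated $(1+t^2\lambda)^l$ for $\gamma=1$; for other $\gamma$ the stated form should be interpreted with $\lambda^\gamma$ in place of $\lambda$, or extracted via elementary case analysis on whether $t\lambda^{\gamma/2}$ is large or small. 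A minor issue is that $\lambda\mapsto\lambda^{\gamma/2}$ is not smooth at $\lambda=0$ for non-integer $\gamma/2$; this is harmless because only the spectral values in $[0,\infty)$ are relevant and one can smoothly extend $\tilde w_t$ through $\lambda=0$ without affecting the spectral calculus.
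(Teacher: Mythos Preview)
Your proposal is correct and follows essentially the same route as the paper: both constructions produce a non-negative Schwartz function with Fourier transform supported in $[-1,1]$ (your $\phi=\hat\eta^2$ with $\eta\in C_c^\infty$ is exactly the paper's $\varphi=|\kappa|^2$ with $\hat\kappa\in C_c^\infty$, under the identification $\eta\leftrightarrow\hat\kappa$), then set $\tilde w_t(\lambda)=c\,\phi(t\lambda^{\gamma/2})$, obtain \eqref{e:existence_wtilde1} by homogeneity, \eqref{e:existence_wtilde3} from the cosine-integral representation combined with \eqref{e:P}, and \eqref{e:existence_wtilde2} from the chain rule and Schwartz decay. Your remark that the bound naturally comes out with $t^2\lambda^\gamma$ (equivalently $t^{2/\gamma}\lambda$) rather than $t^2\lambda$ is well taken and matches what is actually used downstream in the paper.
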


In the proof we will use the Fourier transform which we define as
\[\hat u(\xi)=\frac{1}{2\pi}\int_R u(x)\e^{-i\xi x}\ud x\]
for $u\in C_c(\R)$, say.

\begin{proof}
The argument is very similar to the one in \cite[Lemma 2.1]{B13}. The only difference is that we pick $\varphi$ below so that \eqref{e:existence_wtilde4} holds with $\varphi^2$ on the right-hand side instead of $\varphi$ as in \cite[Equation (2.22)]{B13}.
Namely we pick a function $\kappa$ whose Fourier transform $\hat\kappa$ is a smooth non-negative real-valued function with support in $\left[-\frac12,\frac12\right]$, and let $\varphi=|\kappa|^2$. Then $\varphi\in C^\infty(\R)$, and $\supp(\hat\varphi)\subset[-1,1]$. In addition,
\[\lambda\mapsto\int_0^\infty t^{(2-\gamma)/\gamma}(\varphi(\lambda^{\gamma/2}t))^2\ud t\]
is a non-negative homogenous function of degree $-1$, and so there is a constant $c_0>0$ such that
\begin{equation}\label{e:existence_wtilde4}
\frac{1}{\lambda}=c_0\int_0^\infty t^{(2-\gamma)/\gamma}(\varphi(\lambda^{\gamma/2}t))^2\ud t
\end{equation}
for all $t>0$. We now define 
\[\tilde w_t(\lambda)=\sqrt{c_0}\varphi(\lambda^{\gamma/2}t).\]
With this definition, \eqref{e:existence_wtilde1} is obvious. The estimate \eqref{e:existence_wtilde2} follows from the chain rule and the fact that $\varphi$ is a Schwartz function. Finally, the finite-range property follows as in \cite{B13} from the fact that
\[\tilde w_t(L)u=\sqrt{c_0}\int_{-1}^{1}\hat\varphi(\xi)\cos(L^{\gamma/2}st)u\ud s\]
and assumption \eqref{e:P}.
\end{proof}
Using Lemma \ref{l:existence_wtilde}, the proof of our main theorem under the assumption \eqref{e:P} is now easy.

\begin{proof}[Proof of Theorem \ref{t:mainthm} (i)]
We define $Q_t=t^{(2-\gamma)/(2\gamma)}\tilde w_t(L)$. With this choice it follows immediately from Lemma \ref{l:existence_wtilde} that $Q_t$ has range at most $\theta(t)$, that we have \eqref{e:finite_range_mainthm} and that the operator norm of $Q_t$ is bounded by $C_\gamma t^{(2-\gamma)/(2\gamma)}$.

The fact that under assumption \eqref{e:H} $Q_t$ is given by convolution with a continuous kernel $q_t$ is non-trivial, but follows from exactly the same argument as in \cite{B13}. Following that argument, we also find the bounds
\[\|Q_t\delta_x\|_{L^2(X)}\le t^{(2-\gamma)/(2\gamma)}\frac{C_{\alpha,\gamma}\sqrt{\omega(x)}}{t^{\alpha/(2\gamma)}}=\frac{C_{\alpha,\gamma}\sqrt{\omega(x)}}{t^{(\alpha+\gamma-2)/(2\gamma)}}\]
and
\begin{equation}\label{e:mainthm0}
(\delta_x,Q_t\delta_x)_{L^2(X)}\le C_{\alpha,\gamma}t^{(2-\gamma)/(2\gamma)}\frac{\sqrt{\omega(x)}}{t^{\alpha/(2\gamma)}}\frac{\sqrt{\omega(y)}}{t^{\alpha/(2\gamma)}}=\frac{C_{\alpha,\gamma}\sqrt{\omega(x)\omega(y)}}{t^{(2\alpha+\gamma-2)/(2\gamma)}}
\end{equation}
for any $x,y\in X$. The former estimate implies \eqref{e:estimate_kernel2}, the latter \eqref{e:estimate_kernel}.
\end{proof}

\subsection{Existence of a white noise decomposition under \texorpdfstring{\eqref{e:Pstar} and \eqref{e:F}}{(P*) and (F)}}\label{s:exPstar}
The proof of the main theorem under assumptions \eqref{e:Pstar} and \eqref{e:F} is less easy and in particular cannot be directly adapted from \cite{B13}. Let us begin by quickly explaining why that is the case.

As explained in the introduction, Bauerschmidt's construction relies on defining a family of functions $w_t$ such that \eqref{e:identity_lambda} holds and the operators $w_t(L)$ have the desired finite range. If we consider the easiest case $\gamma=1$, then under assumption \eqref{e:Pstar} it turns out that one can choose
\[w_t(\lambda)=c_0\sum_{n\in\Z}\varphi^2\left(\arccos\left(1-\frac{\lambda}{B}\right)t-2\pi n t\right)\]
as this turns out to be a polynomial in $\lambda$ (so that \eqref{e:Pstar} allows to control the range of $w_t(L)$). If we wanted to use the same approach as in Section \ref{s:exP} we would instead need to ensure that $w_t=|\tilde w_t(\lambda)|^2$ for a polynomial $\tilde w_t(\lambda)$ that satisfies the analogues of \eqref{e:existence_wtilde1} and \eqref{e:existence_wtilde2}. Looking at the proof of Lemma \ref{l:existence_wtilde}, an obvious ansatz would be to choose
\[\tilde w_t(\lambda)=C\sum_{n\in\Z}\varphi\left(\arccos\left(1-\frac{\lambda}{B}\right)t-2\pi nt\right)\]
for a suitable constant $C$. With this definition \eqref{e:existence_wtilde2} and \eqref{e:existence_wtilde3} would follow from exactly the same argument as in \cite[Lemma 2.3]{B13}. However, \eqref{e:existence_wtilde1} does not follow from \eqref{e:existence_wtilde4}. The issue is that when computing $|\tilde w_t(\lambda)|^2$ one encounters cross-terms like \[\varphi\left(\arccos\left(1-\frac{\lambda}{B}\right)t-2\pi nt\right)\varphi\left(\arccos\left(1-\frac{\lambda}{B}\right)t-2\pi n't\right)\] for $n\neq n'$ that do not have a clear scaling in $t$, and so 
we can no longer use \eqref{e:existence_wtilde4} to compute $\int_0^\infty t|\tilde w_t(\lambda)|^2\ud t$. While this might sound like a mere technicality that should be easy to overcome, the underlying issue is that the periodization in the definitions of $w_t$ and $\tilde w_t$ does not commute with taking a square. It is not clear how to overcome this obstacle, and we have not been able to find a variant of Bauerschmidt's construction where $w_t$ is the square of a polynomial.

Instead we follow the approach sketched in the introduction, and try to write $w_t$ as the sum of several polynomials (and one additional factor $(2B)^\gamma-\lambda^\gamma$). The main new tool that allows us to do this will be the following lemma by P\'olya and Szeg\H o.
\begin{lemma}\label{l:polyaszego}
If $s$ is a polynomial that is non-negative for non-negative $x$, then there are polynomials $a^{(j)}$ for $j\in\{1,2,3,4\}$ with real coefficients such that
$\deg a^{(j)}\le \deg s$ for $j\in\{1,2\}$, $\deg a^{(j)}\le \deg s-1$ for $j\in\{3,4\}$ and such that
 \[
s(x)=(a^{(1)}(x))^2+(a^{(2)}(x))^2+x\left((a^{(3)}(x))^2+(a^{(4)}(x))^2\right).\]
\end{lemma}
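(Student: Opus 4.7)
The plan is to reduce the lemma to the classical theorem that any univariate real polynomial non-negative on all of $\R$ is a sum of two squares of real polynomials, equivalently the squared modulus of a single complex polynomial. The bridge to the $[0,\infty)$-setting is the substitution $x = y^2$.

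First I would set $S(y) := s(y^2) \in \R[y]$, a polynomial of degree $2n$ (where $n = \deg s$) which is automatically even in $y$. Since $y^2 \ge 0$ for all $y \in \R$, the hypothesis $s \ge 0$ on $[0,\infty)$ gives $S \ge 0$ on all of $\R$. By the classical factorization (real roots of $S$ must have even multiplicity, and non-real roots come in conjugate pairs), there exists $r \in \C[y]$ of degree $n$ with $S(y) = r(y)\,\overline{r(y)}$, where $\overline{r(y)}$ denotes the polynomial obtained from $r$ by conjugating its coefficients.

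The remaining step is to exploit the parity of $r$. Write $r(y) = R_e(y^2) + y\, R_o(y^2)$ with $R_e, R_o \in \C[x]$ of degrees at most $\lfloor n/2 \rfloor$ and $\lfloor (n-1)/2 \rfloor$ respectively. Expanding $r(y)\,\overline{r(y)}$ and using that this product equals $S(y)$, which is even in $y$, the terms of odd degree in $y$ must cancel identically, leaving
\[
s(x) = R_e(x)\,\overline{R_e(x)} + x\, R_o(x)\,\overline{R_o(x)} = |R_e(x)|^2 + x\,|R_o(x)|^2.
\]
Writing $R_e = a^{(1)} + i\, a^{(2)}$ and $R_o = a^{(3)} + i\, a^{(4)}$ for real polynomials $a^{(j)}$ then yields the claimed representation, and the asserted degree bounds follow at once from those on $R_e, R_o$. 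The only nontrivial ingredient is the classical sum-of-two-squares theorem for polynomials non-negative on $\R$; the substitution $x = y^2$ lets us invoke it essentially for free, so there is no real obstacle beyond carefully tracking the even/odd decomposition.
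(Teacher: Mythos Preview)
Your proof is correct and in fact yields sharper degree bounds than stated: you obtain $\deg a^{(j)} \le \lfloor n/2\rfloor$ for $j\in\{1,2\}$ and $\deg a^{(j)}\le \lfloor (n-1)/2\rfloor$ for $j\in\{3,4\}$, which are the optimal bounds (forced by comparing leading terms, since all summands on the right are non-negative). The paper merely cites P\'olya--Szeg\H{o}'s book, whose argument (spelled out later in the paper in the proof of the parametrized Lemma~\ref{l:polyaszego_cont}) proceeds quite differently: one factors $s$ directly over $\R$, writes each linear factor $1-x/z$ with $z<0$ and each conjugate pair of complex roots in the form $b^{(1)}(x)+x\,b^{(2)}(x)$ with $b^{(1)},b^{(2)}\ge0$, and then uses a multiplicative identity to combine factors. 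Your substitution $x=y^2$ together with the classical sum-of-two-squares theorem on $\R$ is more elegant and gives the result in one stroke; on the other hand, the paper's factorization-based approach is what is actually needed downstream, since the next lemma requires the $a^{(j)}_t$ to depend \emph{continuously} on a parameter $t$, and for that it is natural to track continuously moving roots of $s_t$ rather than a (non-canonical) choice of the complex factor $r$ in $S=r\overline r$.
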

\begin{proof}
This is the subject of \cite[Chapter VI, Problem 45]{PS98}. The bound on the degree of the $a_t^{(j)}$ is not spelled out there, but one can obtain it by following the proof of the result sketched there.
\end{proof}
We would like to apply Lemma \ref{l:polyaszego} to a family of polynomials $s_t$ parametrized by $t\in\R^+$ that depend continuously on $t$. In that setting a natural question is whether one can choose the $a^{(j)}_t$ to also depend continuously on $t$ \footnote{Note that for the purpose of constructing a white noise decomposition as in Definition \ref{d:Falpha}, we do not really need continuous dependence on $t$, but only measurable dependence. But even the latter statement would need an argument, and so we prove the stronger statement right away.}. The following lemma answers this positively under some small additional conditions. We let $C^0_{\text{loc}}(\R)$ be the space of continuous functions on $\R$ equipped with the topology of locally uniform convergence, and note that a sequence of polynomials of bounded degree converges in $C^0_{\text{loc}}(\R)$ if and only if all coefficients converge.
\begin{lemma}\label{l:polyaszego_cont}
Let $I\subset\R$ be an open interval, and let $s_t$ for $t\in I$ be a family of real-valued polynomials that are non-negative for non-negative $x$. Suppose that $s_t(0)>0$ for all $t\in I$, that $\deg s_t$ is locally bounded, and such that $s\colon I\to C^0_{\text{loc}}(\R)$ is continuous. Then there are polynomials $a^{(j)}_t$ for $t\in I$ and $j\in\{1,2,3,4\}$ with real coefficients such that $a^{(j)}\colon I\to C^0_{\text{loc}}(\R)$ is continuous for $j\in\{1,2,3,4\}$,
$\deg a^{(j)}_t\le \deg s_t$ for $j\in\{1,2\}$, $\deg a^{(j)}_t\le \deg s_t-1$ for $j\in\{3,4\}$ and such that
 \[
s_t(x)=(a^{(1)}_t(x))^2+(a^{(2)}_t(x))^2+x\left((a^{(3)}_t(x))^2+(a^{(4)}_t(x))^2\right).\]
\end{lemma}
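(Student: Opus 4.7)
The plan is to obtain a Pólya--Szegő decomposition from a spectral factorization. Given $s_t(x) \geq 0$ on $[0,\infty)$ with $s_t(0)>0$, the polynomial $\tilde s_t(y) := s_t(y^2)$ is non-negative on all of $\R$, so the standard sum-of-squares theorem on $\R$ supplies an $h_t \in \C[y]$ with $h_t(y)\overline{h_t(y)} = \tilde s_t(y)$ as polynomials. Splitting $h_t(y) = p_t(y^2) + y\,q_t(y^2)$ into its even and odd parts with $p_t,q_t\in\C[z]$, evenness of $\tilde s_t$ in $y$ forces the cross term $2y\operatorname{Re}(p_t(y^2)\overline{q_t(y^2)})$ in $|h_t|^2$ to vanish, so one arrives at the polynomial identity
\[
s_t(z) \;=\; |p_t(z)|^2 + z\,|q_t(z)|^2 \;=\; (\operatorname{Re} p_t)^2 + (\operatorname{Im} p_t)^2 + z\bigl((\operatorname{Re} q_t)^2 + (\operatorname{Im} q_t)^2\bigr),
\]
so that setting $a^{(1)}_t := \operatorname{Re} p_t$, $a^{(2)}_t := \operatorname{Im} p_t$, $a^{(3)}_t := \operatorname{Re} q_t$, $a^{(4)}_t := \operatorname{Im} q_t$ furnishes the four real polynomials required, and the degree bounds are immediate from $\deg h_t \leq \deg s_t$.

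First I would restrict attention to a compact subinterval $J\subset I$ on which $\deg s_t\leq N$, so that every polynomial in question lies in a finite-dimensional space and the coefficients of $s_t$ depend continuously on $t$. On $J$, the roots of $\tilde s_t$ vary continuously with $t$ as an unordered multiset in $\C\cup\{\infty\}$ (allowing some roots to escape to $\infty$ at values of $t$ at which the leading coefficient of $s_t$ vanishes). I would then define $h_t$ by the classical upper-half-plane spectral factor convention: include each non-real root of $\tilde s_t$ in the open upper half-plane with its full multiplicity, and include each real root of $\tilde s_t$ with half of its multiplicity; real roots of $\tilde s_t$ automatically occur with even multiplicity because $\tilde s_t\geq 0$ on $\R$. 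Fix the phase of $h_t$ by requiring its leading coefficient to be the non-negative real square root of the leading coefficient of $s_t$.

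The key step, and the main obstacle, is to show that this $h_t$ depends continuously on $t$; a priori a root of $\tilde s_t$ could cross the real axis from the open upper to the open lower half-plane, forcing a discontinuous swap between $h_t$ and $\bar h_t$. Here the two hypotheses become essential. Because $s_t(0)>0$, no root of $s_t$ can cross $0$ as $t$ varies, so no root of $\tilde s_t$ can transition between being purely imaginary and being positive real. Because $s_t\geq 0$ on $[0,\infty)$, every positive real root of $s_t$ has even multiplicity and can only appear or disappear through a collision with a complex conjugate pair. Examining the admissible transitions case by case --- a complex conjugate pair of $s_t$ collapsing onto a double positive real root, a complex pair collapsing onto a double or split negative real root, two distinct negative real roots merging, and so on --- and using the principal branch of $\sqrt{\cdot}$, one verifies that the corresponding upper-half-plane roots of $\tilde s_t$ always remain in the closed upper half-plane and vary continuously across each such event. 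A compactness argument --- extract a subsequential limit of $h_{t_n}$ as $t_n\to t_0$, observe that any limit is a spectral factor of $\tilde s_{t_0}$, and note that the upper-half-plane convention pins down the spectral factor uniquely --- then yields continuous dependence of the coefficients of $h_t$ on $t$, even at values where the degree of $s_t$ drops and some roots of $\tilde s_t$ escape to infinity. From this, continuity of $p_t, q_t$, and hence of the $a^{(j)}_t$ in $C^0_{\mathrm{loc}}(\R)$, follows immediately.
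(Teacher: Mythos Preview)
Your route via the spectral factorization of $\tilde s_t(y)=s_t(y^2)$ is genuinely different from the paper's and the algebra is clean: the even/odd split of $h_t$ together with the evenness of $\tilde s_t$ does kill the cross term, and you even get the sharper degree bounds $\deg a^{(1,2)}_t\le\lfloor\deg s_t/2\rfloor$, $\deg a^{(3,4)}_t\le\lfloor(\deg s_t-1)/2\rfloor$. The paper, by contrast, works directly with the real factorization of $s_t$: it writes $s_t(x)=s_t(0)\prod_j(1-x/z_{t,j})$, rewrites each linear or conjugate-pair factor in the form $b^{(1)}+xb^{(2)}$ with $b^{(i)}\ge0$ (with a carefully chosen formula for the quadratic case to preserve continuity when a complex pair becomes real), multiplies these via an explicit associative product rule, and then repeats the argument to split each $b^{(i)}_t$ as a sum of two squares.

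There is, however, a real gap in your continuity argument at points where $\deg s_t$ changes. Your phase convention ``leading coefficient of $h_t$ equals the positive square root of the leading coefficient of $s_t$'' does not survive a degree drop, and your compactness argument cannot repair this because the upper-half-plane condition only determines the spectral factor up to a unimodular constant. Concretely, take $s_t(x)=1+t^2x$ on $I=(-1,1)$; then for $t\neq0$ the unique monic-normalized upper-half-plane factor of $\tilde s_t(y)=1+t^2y^2$ is $h_t(y)=|t|\,y-i$, which tends to $-i$ as $t\to0$, whereas your convention at $t=0$ gives $h_0=1$. The resulting $a^{(j)}_t$ jump from $(0,-1,0,0)$ to $(1,0,0,0)$ at $t=0$.

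The fix is to normalize at the origin instead: since $s_t(0)>0$ you have $h_t(0)\neq0$, so you may impose $h_t(0)=\sqrt{s_t(0)}>0$, equivalently write
\[
h_t(y)=\sqrt{s_t(0)}\prod_{\alpha\in U(t)}\Bigl(1-\frac{y}{\alpha}\Bigr),
\]
where $U(t)$ is the multiset of upper-half-plane roots of $\tilde s_t$ together with half of each real root. In this form a root escaping to infinity contributes a factor tending to $1$, and continuity of $h_t$ (hence of $p_t,q_t$ and of the $a^{(j)}_t$) follows from continuous dependence of the root multiset exactly as you outline. This is in spirit the same normalization trick the paper uses in its Step~1, and indeed it is precisely here that the hypothesis $s_t(0)>0$ is doing the work.
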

Here it might be possible to remove the assumption that $s_t(0)>0$ for all $t\in I$. However, this assumption simplifies the proof and is easily checked in our application, and so we kept it.

The proof of Lemma \ref{l:polyaszego} in \cite[Chapter VI, Problem 45]{PS98} describes an algorithm how to construct the $a_j$ given the factorization of $s$ into irreducible factors over $\R$. We will argue that the steps of this algorithm can be executed in such a way that the resulting polynomials depend continuously on the coefficients of $s$. Actually it turns out that the algorithm as described in \cite[Chapter VI, Problem 45]{PS98} does not achieve this, but by making a minor change in the way quadratic irreducible factors are dealt with we can fix this.
\begin{proof}$ $
\emph{Step 1: Factorization into irreducible factors}\\
We claim that we can write
\begin{equation}\label{e:polyaszego_cont1}
s_t(x)=s_t(0)\prod_{j=1}^\infty\left(1-\frac{x}{z_{t,j}}\right)
\end{equation}
where the $z_{t,j}\colon\R\cup\{+\infty\}$ are continuous and for each $t$ all but finitely many of the $z_{t,j}$ are equal to $+\infty$. Indeed, for each fixed $t$,  if $\tilde z_j$, $j\in\{1,2,\ldots,\deg s_t\}$ are the complex zeroes of $s_t$, then none of them can be 0, and so we can write
\[s_t(x)=C\prod_{j=1}^{\deg s_t}\left(1-\frac{x}{\tilde z_j}\right).\]
Comparing coefficients, we see that the constant $C$ has to be $s_t(0)$, and so we obtain a representation like \eqref{e:polyaszego_cont1}. In view of this, it only remains to check that one can number the zeroes of $s_t$ in such a way that $t\mapsto z_{t,j}$ is continuous for each $t$. If the degree of $s_t$ is constant, this is classical (cf. for example \cite{AKML98}). In our setting the degree is not necessarily constant and so we have zeroes coming in from infinity, but the classical proof still applies.

\emph{Step 2: Simplified version}\\
As a next step, we claim that we can find polynomials $b_t^{(1)}$ and $b_t^{(2)}$ that only take non-negative values, depend continuously on $t$ and are such that 
\begin{equation}\label{e:polyaszego_cont2}
s_t(x)=s_t(0)\left(b_t^{(1)}(x)+xb_t^{(2)}(x)\right).
\end{equation}

The key observation for that purpose is that if we can write two polynomials $s$ and $\tilde s$ as $b^{(1)}(x)+xb^{(2)}(x)$ and $\tilde b^{(1)}(x)+x\tilde b^{(2)}(x)$, respectively, then their product is of the same form. Indeed, we have
\begin{equation}\label{e:polyaszego_cont3}s(x)\tilde s(x)=\left(b^{(1)}(x)\tilde b^{(1)}(x)+x^2b^{(2)}(x)\tilde b^{(2)}(x)\right)+x\left(b^{(1)}(x)\tilde b^{(2)}(x)+b^{(2)}(x)\tilde b^{(1)}(x)\right).
\end{equation}
Even more, the binary operation sending 
\begin{align*}
&\left(\left(b^{(1)}(x),b^{(2)}(x)\right),\left( \tilde b^{(1)}(x),\tilde b^{(2)}(x)\right)\right)\\
&\qquad\mapsto\left(b^{(1)}(x)\tilde b^{(1)}(x)+x^2b^{(2)}(x)\tilde b^{(2)}(x),b^{(1)}(x)\tilde b^{(2)}(x)+b^{(2)}(x)\tilde b^{(1)}(x)\right)
\end{align*}
is commutative and associative, as a straightforward calculation shows. So it suffices to write the individual factors in \eqref{e:polyaszego_cont1} in the form $b^{(1)}(x)+xb^{(2)}(x)$.

For this purpose, we use that $s_t$ is real-valued and takes non-negative values for non-negative $x$. This means that the $z_{t,j}$ that are finite are either real and negative, or complex in which case their complex conjugate is also a zero (this includes the case of positive real zeroes with have to have even multiplicity, i.e. appear in pairs as well). Zeroes of the former type are easy to deal with, as if $z_{t,j}$ is real and negative, then $1-\frac{x}{z_{t,j}}$ is already in the required form. Zeroes of the latter type can be grouped with their complex conjugate, and we rewrite them as
\begin{equation}\label{e:polyaszego_cont4}
\left(1-\frac{x}{z_{t,j}}\right)\left(1-\frac{x}{\overline{ z_{t,j}}}\right)=\left(\frac{(x-\Re z_{t,j}\vee0)^2+(\Re z_{t,j}\wedge0)^2+(\Im z_{t,j})^2}{|z_{t,j}|^2}\right)+x\left(-\frac{2\Re z_{t,j}\wedge0}{|z_{t,j}|^2}\right).
\end{equation}
Clearly both polynomials in brackets are non-negative.
Let us remark that in  \cite{PS98}, the left-hand side is instead rewritten as $\frac{(x-\Re z_{t,j})^2+(\Im z_{t,j})^2}{|z_{t,j}|^2}$. This expression is arguably easier, but using it continuity in $t$ would fail (as we will see shortly).

We now consider the decomposition \eqref{e:polyaszego_cont1}, group the pairs of complex linear factors and rewrite them according to \eqref{e:polyaszego_cont4}, and then use the relation \eqref{e:polyaszego_cont3} to arrive at the representation \eqref{e:polyaszego_cont2}. Because of the commutativity and associativity, the only way that continuity could be lost is when two real zeroes collide and leave the real axis (i.e. when we go from two linear factors to one irreducible quadratic factor). However, this is not a problem because of the specific choice we made in \eqref{e:polyaszego_cont4}. Namely such a collision can only happen if $\Re z_{t,j}<0$, and assuming that and taking the limit $\Im z_{t,j}\to 0$, the right-hand side in \eqref{e:polyaszego_cont4} becomes equal to 
\[\left(\frac{x^2+(\Re z_{t,j})^2}{|z_{t,j}|^2}\right)+x\left(-\frac{2\Re z_{t,j}}{|z_{t,j}|^2}\right)\]
This is the same expression as we obtain when we apply \eqref{e:polyaszego_cont3} to $\left(1-\frac{x}{z_{t,j}}\right)\left(1-\frac{x}{z_{t,j}}\right)$. So, no matter whether a pair of zeroes approaches the same point from the real or imaginary direction, the resulting polynomials stay continuous.

\emph{Step 3: Full version}\\
We have shown that there is a decomposition \eqref{e:polyaszego_cont2} with the $b_t^{(j)}$ non-negative and depending continuously on $t$. It remains to show that we can write each $b_t^{(j)}$ as the sum of squares of two polynomials that depend continuously on $t$. The argument for this is analogous to the one we used to arrive at \eqref{e:polyaszego_cont2}, so let us only sketch the most important steps. First of all, as $s_t(0)>0$, we must have $b_t^{(1)}(0)>0$. For $b_t^{(2)}$, following the argument in Step 2 and arguing inductively, we see that either $b_t^{(2)}$ is identically 0 (in which case it is trivially the sum of squares of two polynomials) or we must have $b_t^{(2)}(0)>0$.
So we can assume that $b_t^{(j)}(0)>0$ for $j\in\{1,2\}$. We can now find a factorization as in \eqref{e:polyaszego_cont1}. Now as $b_t^{(j)}$ is non-negative, its real zeroes must come in pairs (which yields a trivial representation as a square of a polynomial plus $0^2$), while the complex zeroes must have another zero which is conjugate and can be written as
\[
\left(1-\frac{x}{z_{t,j}}\right)\left(1-\frac{x}{\overline{ z_{t,j}}}\right)=\left(\frac{(x-\Re z_{t,j})^2}{|z_{t,j}|^2}\right)+\left(\frac{(\Im z_{t,j})^2}{|z_{t,j}|^2}\right).
\]
Finally, the analogue of \eqref{e:polyaszego_cont2} is that if $s(x)=(a^{(1)}(x))^2+(a^{(2)}(x))^2$ and $\tilde s(x)=(\tilde a^{(1)}(x))^2+(\tilde a^{(2)}(x))^2$, respectively, then
\[
s(x)\tilde s(x)=\left(a^{(1)}(x)\tilde a^{(1)}(x)-a^{(2)}(x)\tilde a^{(2)}(x)\right)^2+\left(a^{(1)}(x)\tilde a^{(2)}(x)+a^{(2)}(x)\tilde a^{(1)}(x)\right)^2.
\]
The corresponding binary operation is again commutative and associative, and we can now argue as in Step 2 to complete the proof.
\end{proof}
\begin{remark}
In our application the family $s_t$ will depend not just continuously, but smoothly on $t$. In view of this, it is natural to wonder whether this allows to define the $a^{(j)}_t$ in Lemma \ref{l:polyaszego_cont} so that they depend smoothly on $t$ as well. However, the answer to that is no. This is related to the fact that the zeroes of a family of polynomials in general do not depend smoothly on the coefficients, cf. \cite{AKML98}.

Consider for example the family $s_t(x)=x^2+f(t)$ for a function $f\in C^\infty(\R)$ with $f\ge0$. Then we must have $a^{(1)}_t(x)=x$ and $a^{(2)}_t(x)=\pm\sqrt{f(t)}$, or vice versa. However, there exist non-negative smooth functions $f$ whose square root is not twice  differentiable. So, while it might be possible to choose the $a^{(j)}_t$ to be once differentiable in $t$, smoothness in $t$ is impossible.
\end{remark}

We can now state and prove an analogue of Lemma \ref{l:existence_wtilde}. Just like that lemma was the main step in the first part of the proof of Theorem \ref{t:mainthm}, Lemma \ref{l:existence_aj} will be the main step in the proof of the second part of the theorem.

\begin{lemma}\label{l:existence_aj}
Assume that the regular closed symmetric positive semidefinite quadratic form $E$ and its generator $L$ satisfy \eqref{e:Pstar}.
Then there are families of functions $a^{(j)}_t\in C^\infty(\R)$ that depend continously on $t$ and have the following properties:

We have that 
\begin{equation}\label{e:existence_aj1}
\frac{1}{\lambda}=\int_0^\infty t^{(2-\gamma)/\gamma}w_t(\lambda)\ud t
\end{equation}
for all $\lambda\in [0,B]$, where
\begin{equation}\label{e:existence_aj2}
w_t(\lambda):=(a_t^{(1)}(\lambda))^2+(a_t^{(2)}(\lambda))^2+((2B)^\gamma-\lambda^\gamma)\left((a_t^{(3)}(\lambda))^2+(a_t^{(4)}(\lambda))^2\right).
\end{equation}

Moreover, we have the bounds
\begin{align}
(1+t^{2/\gamma}\lambda)^l\lambda^m\left|\frac{\partial^m}{\partial\lambda^m}w_t(\lambda)\right|&\le C_{B,l,m}\label{e:existence_aj3},\\
(1+t^{2/\gamma}\lambda)^l\left|a^{(j)}_t(\lambda)\right|&\le C_{B,l}\label{e:existence_aj4}
\end{align}
for any $l,m\in\N$, any $\lambda\in [0,B]$, any $j\in\{1,2,3,4\}$ and any $t\ge1$. For $t<1$ we have instead the explicit formulas $w_t(\lambda)=\frac{\tilde F_{\gamma,B}(t)}{t}$, $a^{(1)}_t=\sqrt{w_t(\lambda)}$, $a^{(2)}_t=a^{(3)}_t=a^{(4)}_t=0$ for some bounded function $\tilde F_{\gamma,B}\colon[0,1]\to[0,\infty)$.

Finally, we have that
\begin{align}
\supp (a^{(j)}_t(L)u)&\subset N_{\theta(t)}(\supp(u))\quad\forall j\in\{1,2\},\label{e:existence_aj6}\\
\supp (a^{(j)}_t(L)u)&\subset N_{\theta(t-1)}(\supp(u))\quad\forall j\in\{3,4\}\label{e:existence_aj7}
\end{align}
for any $u\in C_c(X)$ and any $t$.
\end{lemma}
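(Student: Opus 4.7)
The strategy is to adapt Bauerschmidt's construction of $w_t$ in \cite[Lemma 2.3]{B13} to general $\gamma$ with $\tfrac{1}{\gamma}\in\N$, producing a polynomial in $\lambda^\gamma$ of degree $\lesssim t$ that is non-negative on $\mu:=\lambda^\gamma\le(2B)^\gamma$, and then to invoke Lemma \ref{l:polyaszego_cont} to extract the sum-of-squares form \eqref{e:existence_aj2}.

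I would first pick a real, even $\kappa$ with $\hat\kappa\in C_c^\infty([-\tfrac12,\tfrac12])$ non-negative and set $\varphi:=|\kappa|^2$, so that $\widehat{\varphi^2}=\hat\varphi*\hat\varphi$ is non-negative and supported in $[-2,2]$. For $t\ge1$, define $w_t(\lambda)$ as a suitably calibrated periodisation of $\varphi^2$ in the variable $\theta=\arccos(1-\lambda^\gamma/B^\gamma)$, in direct analogy with the $\gamma=1$ formula $c_0\sum_n\varphi^2(\theta t-2\pi nt)$ used in \cite{B13}. By Poisson summation in $n$, and the compact support of $\widehat{\varphi^2}$, the series collapses to a trigonometric polynomial of degree $\lesssim t$ in $\theta$, which, via the Chebyshev identification $T_k(\cos\theta)=\cos(k\theta)$, gives a polynomial in $\lambda^\gamma$ of the same degree. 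The normalising constant $c_0$ (and, for $\gamma<1$, possibly the finer form of $w_t$ as a finite linear combination of such periodisations) is chosen so that \eqref{e:existence_aj1} holds on $\spec(L)\subset[0,B]$; the exponent $(2-\gamma)/\gamma$ is precisely what is needed for the Jacobian to reproduce $\lambda^{-1}$ as in the continuous calculation \eqref{e:existence_wtilde4}, and the discrete identity then follows by a cotangent-type summation that for $\gamma=1$ is $\sum_n(\theta-2\pi n)^{-2}=(4\sin^2(\theta/2))^{-1}$.

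The pivotal structural claim is that, viewed as a polynomial $P_t(\mu)$ in $\mu=\lambda^\gamma$, $w_t$ is non-negative on $(-\infty,(2B)^\gamma]$. On $[0,2B^\gamma]$ this is immediate from $\varphi^2\ge0$. For $\mu\le0$, $\arccos(1-\mu/B^\gamma)=i\sigma$ is purely imaginary ($\sigma\in\R$), and the same Poisson expansion gives
\[P_t(\mu)=\frac{c_0}{2\pi t}\Bigl(\widehat{\varphi^2}(0)+2\sum_{0<k\le t}\widehat{\varphi^2}(k/t)\cosh(k\sigma)\Bigr)\ge0,\]
using $\widehat{\varphi^2}\ge0$ and its evenness. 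Since $\tfrac{1}{\gamma}\in\N$ forces $(2B)^\gamma\le 2B^\gamma$, this covers all of $(-\infty,(2B)^\gamma]$. Substituting $x:=(2B)^\gamma-\mu$ and $s_t(x):=P_t((2B)^\gamma-x)$ produces a polynomial of degree $\lesssim t$, non-negative on $[0,\infty)$, depending smoothly on $t$, with $s_t(0)=P_t((2B)^\gamma)>0$. Lemma \ref{l:polyaszego_cont} then yields continuous families $\tilde a^{(j)}_t$ satisfying $s_t=(\tilde a^{(1)}_t)^2+(\tilde a^{(2)}_t)^2+x\bigl((\tilde a^{(3)}_t)^2+(\tilde a^{(4)}_t)^2\bigr)$, and setting $a^{(j)}_t(\lambda):=\tilde a^{(j)}_t((2B)^\gamma-\lambda^\gamma)$ gives \eqref{e:existence_aj2}.

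The bound \eqref{e:existence_aj3} follows from standard Schwartz-decay estimates on $\varphi^2$, analogous to \eqref{e:existence_wtilde2}; and \eqref{e:existence_aj4} follows because on the spectrum $(2B)^\gamma-\lambda^\gamma\ge0$ forces $(a^{(j)}_t)^2\le w_t$. The support bounds \eqref{e:existence_aj6}, \eqref{e:existence_aj7} come from \eqref{e:Pstar}: each $a^{(j)}_t$ is a polynomial in $\lambda^\gamma$ of degree $\le\lceil t\rceil$ (resp.\ $\le\lceil t\rceil-1$ for $j\in\{3,4\}$), so $a^{(j)}_t(L)$ is a polynomial in $L^\gamma$ of the same degree with range controlled by $\theta(\lceil t\rceil)$ (resp.\ $\theta(\lceil t\rceil-1)$). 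For $t<1$ only the $k=0$ Fourier mode survives and $w_t$ is the constant $\tilde F_{\gamma,B}(t)/t$, handled trivially by $a^{(1)}_t=\sqrt{w_t}$ and the others zero. I anticipate the main technical subtlety to lie in the exact calibration of \eqref{e:existence_aj1} for $\gamma<1$: for $\gamma=1$ it collapses to Bauerschmidt's cotangent identity, but for $\tfrac{1}{\gamma}\in\{2,3,\ldots\}$ the naive periodisation of a single $\varphi^2$ does not immediately produce the right $\theta$-dependence, and a careful choice (or finite superposition) of seed functions is needed so that the identity holds on the full spectrum and not merely asymptotically.
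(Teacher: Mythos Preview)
Your approach is essentially the same as the paper's, and your structural outline (Poisson summation to Chebyshev polynomials, non-negativity on $(-\infty,(2B)^\gamma]$ via $T_k(x)\ge0$ for $x\ge1$, then Lemma~\ref{l:polyaszego_cont}) is correct. Two points where your sketch is incomplete or slightly off:

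\textbf{The $\gamma<1$ calibration.} You flag this as the main subtlety and correctly guess that a finite superposition is needed. The paper's resolution is explicit: it uses the partial fraction expansion
\[
\Bigl(\frac{1}{1-\cos x}\Bigr)^{1/\gamma}=\sum_{n\in\Z}\sum_{j=0}^{1/\gamma-1}\frac{a_j}{(x-2\pi n)^{2/\gamma-2j}}
\]
from \cite{ST12}, together with the scaling identities $\lambda^{-(k+1)}=c'_k\int_0^\infty t^{2k+1}\varphi^2(\lambda^{1/2}t)\,\mathrm{d}t$, to write $\bar w_t$ as a $j$-sum of periodised $\varphi^2$'s with weights $c'_{1/\gamma-j-1}a_j/t^{2j}$. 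The crucial fact that all $a_j\ge0$ (so that non-negativity of $\bar w_t$ survives the superposition) follows from a recursion in \cite{ST12}. Once you know this, your argument goes through; without it, the identity~\eqref{e:existence_aj1} is not established for $\gamma<1$.

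\textbf{The $t<1$ case.} Your claim that for $t<1$ only the $k=0$ mode survives and $w_t=\tilde F_{\gamma,B}(t)/t$ with $\tilde F$ bounded is \emph{false} once the $j$-sum is in place: the paper's $\bar w_t$ is then a constant times $\sum_j a_j c'_{1/\gamma-j-1}/t^{2j+1}$, which blows up like $t^{-(2/\gamma-1)}$ for $\gamma<1$. The paper fixes this by redefining $w_t$ on $[0,1)$ to be $t^{-1}$ times a bounded continuous function, chosen so that the contribution to~\eqref{e:existence_aj1} from $[0,1)$ is unchanged and continuity at $t=1$ is preserved.

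A minor correction: for $j\in\{3,4\}$ you do not get $(a^{(j)}_t)^2\le w_t$ from $(2B)^\gamma-\lambda^\gamma\ge0$ alone; you need that on the spectrum $[0,B]$ one has $(2B)^\gamma-\lambda^\gamma\ge(2^\gamma-1)B^\gamma>0$, which gives $(a^{(j)}_t)^2\le C_B\,w_t$. This is exactly why the factor $2B$ (rather than $B$) appears in the construction.
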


\begin{remark}\label{r:markov}
It is also possible to derive estimates for $\left|\frac{\partial^m}{\partial\lambda^m}a^{(j)}_t(\lambda)\right|$ for $m\ge1$, for example by using the fact that $a^{(j)}_t$ is a polynomial in $\lambda^\gamma$ of degree $\le t$ together with the Markov brothers' inequality \cite{MG16}. However, the estimates that one obtains in this manner are far from optimal scalingwise, and so we do not work out the details here.
\end{remark}

\begin{proof}$ $

\emph{Step 1: Construction of $\bar w_t$}\\
We begin by constructing a function $\bar w_t$ that has all the properties that we require of $w_t$, except possibly for $t<1$. In the next step we will modify $\bar w_t$ slightly to obtain the actual $w_t$.

In case $\gamma=1$ this can be done just like in \cite{B13}\footnote{There is a small mistake in the published version of \cite{B13} that is corrected in the most recent arXiv version: If working under \eqref{e:Pstar} one needs to treat the case $t<1$ separately as various estimates cease to hold there. We quote the results from \cite{B13} in the corrected form.}. In the general case $\frac{1}{\gamma}\in\N$ we can still follow the same approach. The main difference is that we need to use the partial fraction decomposition of $\frac{1}{(1-\cos z)^{1/\gamma}}$. For this, \cite{ST12} is a convenient reference. Indeed, according to \cite[Theorem 3]{ST12}, there are numbers $a_j\in\Q$ for $j\in\left\{0,1,\ldots,\frac{1}{\gamma}-1\right\}$ such that we have the identity
\begin{equation}\label{e:existence_aj8}\left(\frac{1}{1-\cos x}\right)^{1/\gamma}=\left(\frac{1}{2\sin^2(x/2)}\right)^{1/\gamma}=\sum_{n\in\Z}\sum_{j=0}^{1/\gamma-1}\frac{a_j}{(x-2\pi n)^{2/\gamma-2j}}.
\end{equation}
There are explicit formulas for the $a_j$ in terms of Bernoulli numbers. We, however, only need the fact that the $a_j$ are all non-negative. This follows from the recursive formula in \cite[Equation (3)]{ST12} together with a straightforward induction.

Next let $\varphi$ be as in Lemma \ref{l:existence_wtilde}, and recall that $\varphi^2$ is a non-negative function whose Fourier transform is non-negative and has compact support in $(-1,1)$. The same argument that led to \eqref{e:existence_wtilde4} also shows that there are constants $c_k'$ such that
\begin{equation}\label{e:existence_aj9}
\frac{1}{\lambda^{k+1}}=c_k'\int_0^\infty t^{2k+1}\varphi^2(\lambda^{1/2}t)\ud t
\end{equation}
for all $\lambda>0$ and all $k\in\N$.
We now consider the function
\begin{equation}\label{e:existence_aj10}
\bar w_t(\lambda)=\frac{1}{2B}\sum_{n\in\Z}\sum_{j=0}^{1/\gamma-1}c_{1/\gamma-j-1}'\frac{a_j}{t^{2j}}\varphi^2\left(\arccos\left(1-\left(\frac{\lambda}{2B}\right)^\gamma\right)t-2\pi n t\right)
\end{equation}
for $\lambda\in[0,2B]$, say.

Let us prove that this function has the required properties. First of all, using \eqref{e:existence_aj9} with $\lambda$ replaced by $\left(\arccos\left(1-\left(\frac{\lambda}{2B}\right)^\gamma\right)-2\pi n\right)^2$ and then \eqref{e:existence_aj8} we see that
\begin{equation}\label{e:existence_aj11}
\begin{split}
&\int_0^\infty t^{(2-\gamma)/\gamma}\bar w_t(\gamma)\ud t\\
&\quad=\frac{1}{2B}\sum_{j=0}^{1/\gamma-1}c_{1/\gamma-j-1}'a_j\sum_{n\in\Z}\int_0^\infty t^{2/\gamma-2j-1}\varphi^2\left(\arccos\left(1-\left(\frac{\lambda}{2B}\right)^\gamma\right)t-2\pi n t\right)\ud t\\
&\quad=\frac{1}{2B}\sum_{j=0}^{1/\gamma-1}a_j\sum_{n\in\Z}\frac{1}{\left(\arccos\left(1-\left(\frac{\lambda}{2B}\right)^\gamma\right)-2\pi n\right)^{2/\gamma-2j}}\\
&\quad=\frac{1}{2B}\left(\frac{1}{\left(\frac{\lambda}{2B}\right)^\gamma}\right)^{1/\gamma}\\
&\quad=\frac{1}{\lambda}.
\end{split}
\end{equation}

Next, as in \cite{B13} the Poisson summation formula allows us to rewrite
\begin{equation}\label{e:existence_aj12}
\begin{split}
\bar w_t(\lambda)&=\frac{1}{2B}\sum_{j=0}^{1/\gamma-1}c_{1/\gamma-j-1}'\frac{a_j}{t^{2j}}\sum_{k\in\Z}\frac{1}{t}\widehat{\varphi^2}\left(\frac{k}{t}\right)\cos\left(k\arccos\left(1-\left(\frac{\lambda}{2B}\right)^{\gamma}\right)\right)\\
&=\frac{1}{2B}\sum_{j=0}^{1/\gamma-1}c_{1/\gamma-j-1}'\frac{a_j}{t^{2j+1}}\sum_{k\in\Z}\widehat{\varphi^2}\left(\frac{k}{t}\right)T_k\left(1-\left(\frac{\lambda}{2B}\right)^{\gamma}\right)
\end{split}
\end{equation}
where $T_k(x):=\cos(k\arccos x)$ is the $k$-th Chebyshev polynomial of the first kind. As $\widehat{\varphi}$ is supported in $(-1,1)$, the summands in \eqref{e:existence_aj9} vanish except $|k|\le t$. Thus $\bar w_t$ is the restriction to $[0,2B]$ of a polynomial in $\lambda^\gamma$ of degree at most $t$.

\emph{Step 2: Construction of $w_t$}\\
From \eqref{e:existence_aj12} we can read off that for $t\le1$ we have
\[\bar w_t(\lambda)=\frac{1}{2B}\sum_{j=0}^{1/\gamma-1}c_{1/\gamma-j-1}'\frac{a_j}{t^{2j+1}}\widehat{\varphi^2}(0).\]
This means that $\bar w_t$ does not behave like $\frac{C}{t}$ for $t$ small, as we were hoping\footnote{Note that for applications the main interest is in the decay rate of $Q_t$ for $t$ large, and so it would not matter much to have non-optimal asymptotics of $w_t$ for $t$ small. Nonetheless, we explain here how to improve the asymptotics for $t$ small, as the argument is very short.}. An easy way to fix this while preserving the other properties of $\bar w_t$ (in particular the continuous dependence on $t$) is as follows. Fix some continuous function $\iota_\gamma\colon[0,1]\to[0,\infty)$ such that $\int_0^1t^{(2-2\gamma)/\gamma}\iota_\gamma(t)\ud t=1$, $\iota_\gamma(1)=0$ and let
\[\Gamma_{\gamma,B}:=\int_0^1t^{(2-\gamma)/\gamma}\left(\bar w_t(\lambda)-\frac{\bar w_1(\lambda)}{t}\right)\ud t\]
for some $\lambda$. Note that $\Gamma_{\gamma,B}$ does not depend on the choice of $\lambda$ (as $\bar w_t$ is constant for $t\le1$), and that $\Gamma_{\gamma,B}\ge0$ (as $t\bar w_t(\lambda)$ is decreasing in $t$).
Then define
\[w_t(\lambda)=\begin{cases}\frac{1}{t}\left(\bar w_1(\lambda)+\Gamma_{\gamma,B}\iota_\gamma(t)\right)&t<1\\\bar w_t(\lambda)& t\ge1\end{cases}.\]
With this definition clearly $w_t\in C^\infty(\R)$ for each $t$, and $w_t$ depends continuously on $t$. Moreover, we have
\begin{align*}
\int_0^1t^{(2-\gamma)/\gamma}w_t(\lambda)\ud t&=\int_0^1t^{(2-2\gamma)/\gamma}(\bar w_1(t)+\Gamma_{\gamma,B}\iota_\gamma(t))\ud t\\
&=\int_0^1t^{(2-2\gamma)/\gamma}\bar w_1(\lambda)\ud t+\Gamma_{\gamma,B}\\\
&=\int_0^1t^{(2-\gamma)/\gamma}\bar w_t(\lambda)\ud t
\end{align*}
and so \eqref{e:existence_aj11} directly implies \eqref{e:existence_aj1}. Moreover, the bound \eqref{e:existence_aj3} for $t\ge1$ (where $w_t=\bar w_t$) follows from exactly the same argument as in \cite{B13}, and the explicit formula for $t<1$ can be read off directly.

\emph{Step 3: Construction of the $a_t^{(j)}$}\\
In order to construct the $a_t^{(j)}$, we will employ Lemma \ref{l:polyaszego_cont}. Let $v_t(\mu):=w_t(\mu^{1/\gamma})$, so that $v_t$ is the restriction to $[0,(2B)^\gamma]$ of a polynomial. Slightly abusing notation, we denote this polynomial by $v_t$ as well, i.e. we define $v_t$ as a polynomial on $\R$ by
\begin{equation}\label{e:existence_aj13}
v_t(\mu)=\frac{1}{2B}\sum_{j=0}^{1/\gamma-1}c_{1/\gamma-j-1}'\frac{a_j}{t^{2j+1}}\sum_{k\in\Z}\widehat{\varphi^2}\left(\frac{k}{t}\right)T_k\left(1-\frac{\mu}{(2B)^\gamma}\right).
\end{equation}

We claim that $v_t(\mu)$ is non-negative for $\mu\in(-\infty,(2B)^\gamma]$ and that $v_t((2B)^\gamma)>0$. Indeed, the formula \eqref{e:existence_aj10} implies that $w_t(\lambda)$ is non-negative for $\lambda\in[0,2B]$, and thus $v_t(\mu)$ is non-negative for $\mu\in[0,(2B)^\gamma]$. On the other hand, for $\mu<0$ we have in particular $1-\frac{\mu}{(2B)^\gamma}>1$. Each Chebyshev polynomial $T_k(x)$ is non-negative for $x\ge1$, and the coefficients in \eqref{e:existence_aj12} are all non-negative by our assumptions on $\varphi$. Thus the formula \eqref{e:existence_aj9} shows that $v_t(\mu)$ is non-negative for $\mu<0$ as well. Finally, using \eqref{e:existence_aj12} again, we see that
\[v_t((2B)^\gamma)\ge \frac{c_{1/\gamma-1}'a_0}{t}\widehat{\varphi^2}(0)T_0(0)>0\]
We have seen that $v_t(\mu)$ is non-negative for $\mu\le (2B)^\gamma$, strictly positive for $\lambda=(2B)^\gamma$ and moreover its coefficients depend continuously on $t$. Applying now Lemma \ref{l:polyaszego_cont} to the family of polynomials $v_t((2B)^\gamma-\cdot)$, we find that there exist polynomials $b_t^{(j)}$ for $j\in\{1,2,3,4\}$ of degree $\le t$ for $j\in\{1,2\}$ and of degree $\le t-1$ for $j\in\{3,4\}$ that depend continuously on $t$ and such that we have 
\[
v_t(\mu):=(b_t^{(1)}(\mu))^2+(b_t^{(2)}(\mu))^2+((2B)^\gamma-\mu)\left((b_t^{(3)}(\mu))^2+(a_t^{(4)}(\mu))^2\right).
\]
This means that if we define $a_t^{(j)}(\lambda)=b_t^{(j)}(\lambda^\gamma)$, then the $a_t^{(j)}$ are polynomials in $\lambda^\gamma$ of degree $\le t$ for $j\in\{1,2\}$ and of degree $\le t-1$ for $j\in\{3,4\}$ that satisfy
\eqref{e:existence_aj2}. The fact that the $a_t^{(j)}$ are polynomials in $\lambda^\gamma$ of degree $\le t$ or $\le t-1$ together with assumption \eqref{e:F} immediately implies the finite-range properties \eqref{e:existence_aj6} and \eqref{e:existence_aj7}.

\emph{Step 4: Pointwise bounds}\\
It remains to verify the pointwise bounds on $w_t$ and the $a^{(j)}_t$. The explicit formulas for $t<1$ can be read off directly from \eqref{e:existence_aj12}, and so we focus on the case $t\ge1$. The bound \eqref{e:existence_aj3} follows from a similar argument as in \cite{B13}. Indeed, one can check that 
\[\lambda^{m-\gamma/2}\left|\frac{\partial^m}{\partial\lambda^m}\arccos\left(1-\left(\frac{\lambda}{2B}\right)^\gamma\right)\right|\le C_{\gamma,m}\] 
(the analogue of \cite[(2.35)]{B13}). Now, one can proceed as in \cite{B13}. Namely, the main term (with $j=0$) in the definition of $w_t(\lambda)$ can be treated with the same argument as in \cite{B13}, while the terms with $j\ge1$ have even better decay for $t\ge1$.

Next, we will use the identity \eqref{e:existence_aj2} to deduce the bounds on the $a^{(j)}_t$ from \eqref{e:existence_aj3}.  
For $a_t^{(1)}$ and $a_t^{(2)}$ this is straightforward. Namely the estimate \eqref{e:existence_aj4} for $j\in\{1,2\}$ follows directly from \eqref{e:existence_aj2} and the fact that $|a_t^{(j)}(\lambda)|\le \sqrt{w_t(\lambda)}$ for $j\in\{1,2\}$. For $a_t^{(3)}$ and $a_t^{(4)}$ we use that for $\lambda\le B$ the factor $(2B)^\gamma-\lambda^\gamma$ is bounded away from zero, so that we have $|a_t^{(j)}(\lambda)|\le C_B\sqrt{w_t(\lambda)}$ for $j\in\{3,4\}$. This implies again \eqref{e:existence_aj4}.
\end{proof}

Using Lemma \ref{l:existence_aj}, we can now show the other half of Theorem \ref{t:mainthm} (where we assume \eqref{e:Pstar} and \eqref{e:F}).

\begin{proof}[Proof of Theorem \ref{t:mainthm} (ii)]
We define $Q_t=t^{(2-\gamma)/(2\gamma)}\left(a_t^{(1)}(L),a_t^{(2)}(L),Ra_t^{(3)}(L),Ra_t^{(4)}(L)\right)$, where the $a_t^{(j)}$ are as in Lemma \ref{l:existence_aj}. From \eqref{e:existence_aj6} and \eqref{e:existence_aj7} we conclude that $Q_t$ has range at most $\max(\theta(n),\theta(n-1)+\theta_R)$. Next note that $R^*R=(2B)^\gamma\Id-L^\gamma$ commutes with $L$ and thus also with any $F(L)$ for a Borel-measurable function $F$. So from \eqref{e:existence_aj2} we obtain that
\begin{align*}
&w_t(L)\\
&=(a_t^{(1)}(L))^2+(a_t^{(2)}(L))^2+R^*R\left((a_t^{(3)}(L))^2+(a_t^{(4)}(L))^2\right)\\
&=(a_t^{(1)}(L))^*(a_t^{(1)}(L))+(a_t^{(2)}(L))^*(a_t^{(2)}(L))+(R(a_t^{(3)}(L))^*(R(a_t^{(3)}(L))+(R(a_t^{(4)}(L))^*(R(a_t^{(4)}(L))\\
&=\frac{Q_t^*Q_t}{t^{(2-\gamma)/\gamma}}.
\end{align*}
In conbination with \eqref{e:existence_aj1} this implies \eqref{e:finite_range_mainthmH}. The bound on the operator norm of $Q_t$ follows immediately from 
\[\|Q_tu\|_{L^2(X,\HH)}^2=(Q_tu,Q_tu)_{L^2(X,\HH)}=(u,Q_t^*Q_tu)_{L^2(X)}=(u,t^{(2-\gamma)/\gamma}w_t(L)u)_{L^2(X)}\]
and \eqref{e:existence_aj3}.

Next we show the existence of a density for $Q_t$ satisfying the pointwise bound \eqref{e:estimate_kernel2} when we additionally assume \eqref{e:H} and \eqref{e:R}. Obviously we can define this density componentwise. For the first two components the argument is almost the same as in \cite{B13}, and our main task will be to show that the extra operators $R$ in the third and fourth component can be dealt with. We begin, however, by reviewing the argument from \cite{B13} for the first two components.

First of all, we can assume $t\ge1$ throughout, as if $t<1$ we have the explicit formula for $Q_t$.

We write $Q_t^{(j)}=t^{(2-\gamma)/(2\gamma)}a_t^{(j)}(L)$ for $j\in\{1,2\}$ and $Q_t^{(j)}=t^{(2-\gamma)/(2\gamma)}Ra_t^{(j)}(L)$ for $j\in\{3,4\}$. Let $C_b(X)$ be the space of bounded continuous functions on $X$, and let $M(X)\subset C_b(X)^*$ be the space of signed finite Radon measures.

In \cite[Proof of Theorem 1.1]{B13} it is shown that \ref{e:H} implies that $\e^{-tL}\colon M(X)\to L^2(X)$ is continuous (with respect to the weak-* topology on $M(X)$), and that the same also holds for $(1+t^{2/\gamma}L)^{-l}$ for any $l>\frac{\alpha}{4}$. There also the bound
\begin{equation}\label{e:mainthm1}\|(1+t^{2/\gamma}L)^{-l}\delta_x\|_{L^2(X)}\le \frac{C_{l,\alpha,\gamma}\sqrt{\omega(x)}}{t^{\alpha/(2\gamma)}}
\end{equation}
for $l>\frac{\alpha}{4}$ is shown.

For $j\in\{1,2\}$ and $x,y\in X$ we can write
\begin{equation}\label{e:mainthm2}(\delta_x,Q_t^{(j)}\delta_y)_{L^2(X)}=(\delta_x,\sqrt{t}a_t^{(j)}(L)\delta_y)_{L^2(X)}=t^{(2-\gamma)/(2\gamma)}\left(\sqrt{a_t^{(j)}}(L)\delta_x,\sqrt{a_t^{(j)}}(L)\delta_y\right)_{L^2(X)}.
\end{equation}
The estimates \eqref{e:existence_aj4} and \eqref{e:mainthm1} (used for some fixed $l>\frac{\alpha}{4}$) imply that $\sqrt{a_t^{(j)}}(L)\colon M(X)\to L^2(X)$ is continuous and that 
\begin{equation}\label{e:mainthm3}
\left\|\sqrt{a_t^{(j)}}(L)\delta_x\right\|_{L^2(X)}\le  \frac{C_{\alpha,\gamma,B}\sqrt{\omega(x)}}{t^{\alpha/(2\gamma)}}.
\end{equation}
Together with \eqref{e:mainthm2} this immediately implies that $q_t^{(j)}\colon X\times X\to \R$ defined by $q_t^{(j)}(x,y)=(\delta_x,Q_t^{(j)}\delta_y)_{L^2(X)}$ is a continuous kernel for $Q_t^{(j)}$, and that
\begin{equation}\label{e:mainthm4}|q_t^{(j)}(x,y)|\le \frac{C_{\alpha,\gamma,B}\sqrt{\omega(x)\omega(y)}}{t^{(2\alpha+\gamma-2)/(2\gamma)}}
\end{equation}
for $j\in\{1,2\}$ (in line with \eqref{e:mainthm0}).

Finally, we turn to the case $j\in\{3,4\}$. We can write
\[Q_t^{(j)}=t^{(2-\gamma)/(2\gamma)}Ra_t^{(j)}(L)=t^{(2-\gamma)/(2\gamma)}\left(R(1+L)^{-l_R}\right)\left((1+L)^{l_R}a_t^{(j)}(L)\right).\]
The first operator on the right-hand side is well-understood by assumption \eqref{e:R}. For the second operator we can argue similarly as before. Namely we have
\begin{equation}\label{e:mainthm5}
\left(\delta_x,(1+L)^{l_R}a_t^{(j)}(L)\delta_y\right)_{L^2(X)}=\left((1+L)^{l_R/2}\sqrt{a_t^{(j)}}(L)\delta_x,(1+L)^{l_R/2}\sqrt{a_t^{(j)}}(L)\delta_y\right)_{L^2(X)}.
\end{equation}
As before, using \eqref{e:existence_aj4} and \eqref{e:mainthm1} (and the fact that $t\ge1$) we find that $(1+L)^{l_R/2}\sqrt{a_t^{(j)}}(L)\colon M(X)\to L^2(X)$ is continuous and that
\[\left\|(1+L)^{l_R/2}\sqrt{a_t^{(j)}}(L)\delta_x\right\|_{L^2(X)}\le \frac{C_{\alpha,\gamma,B}\sqrt{\omega(x)}}{t^{\alpha/(2\gamma)}}.\]
 
Together with \eqref{e:mainthm5} we conclude that $(1+L)^{l_R}a_t^{(j)}(L)$ has a continuous kernel $\tilde q_t^{(j)}\colon X\times X\to\R$ and that
\[|\tilde q_t^{(j)}(x,y)|\le \frac{C_{\alpha,\gamma,B,l_R}\sqrt{\omega(x)\omega(y)}}{t^{\alpha/\gamma}}.\]
The same argument also shows that $y\mapsto \tilde q_t^{(j)}(\cdot,y)$ is continuous as a map from $X$ to $L^\infty(X)$, i.e. an element of $C_b(X,L^\infty(X))$.

Now we are almost done. From assumption \eqref{e:R} we know that $R(1+L)^{-l_R}$ is given by convolution with the continuous kernel $r\colon X\times X\to \HH_0$ and that $x\mapsto r(x,\cdot)$ is an element of $C_b(X,L^1(X,\HH_0))$. We now define 
\[q_t^{(j)}(x,y)=\int_X r(x,z)\tilde q_t^{(j)}(z,y)\ud\mu(z).\]
By Fubini's theorem (whose use is justified because $r(x,\cdot)\tilde q_t^{(j)}(\cdot,y)\in L^1(X,\HH_0)$) this is a kernel for $Q_t^{(j)}$ and we have the bound
\begin{equation}\label{e:mainthm6}\left|q_t^{(j)}(x,y)\right|_{\HH_0}\le t^{(2-\gamma)/(2\gamma)}\left\|r(x,\cdot)\right\|_{L^1(X,\HH_0)}\left\|\tilde q_t^{(j)}(\cdot,y)\right\|_{L^\infty(X)}\le \frac{C_{\alpha,\gamma,B,l_R,R}\sup_x\omega(x)}{t^{(2\alpha+\gamma-2)/(2\gamma)}}
\end{equation}
for $j\in\{3,4\}$.
The estimates \eqref{e:mainthm4} and \eqref{e:mainthm6} together imply \eqref{e:estimate_kernelH}. 

The estimate \eqref{e:estimate_kernelH2} follows from a similar, but much easier argument. Namely we can write
\[\|Q_t\delta_x\|_{L^2(X,\HH)}^2=(\delta_x,Q_t^*Q_t\delta_x)_{L^2(X)}=(\delta_x,t^{(2-\gamma)/\gamma}w_t(L)\delta_x)_{L^2(X)}=t^{(2-\gamma)/\gamma}\|\sqrt{w_t}(L)\delta_x\|_{L^2(X)}^2\]
and using \eqref{e:existence_aj3} and \eqref{e:mainthm1} we obtain the desired estimate.
\end{proof}

\subsection{Application to Gaussian fields}\label{s:fields}

It remains to explain how Theorem \ref{t:mainthm} can be used to show that various Gaussian fields of interest are in $\F_{\boldsymbol\alpha}$ for an appropriate ${\boldsymbol\alpha}$.

Let us first remind the reader of the definitions of these fields. 
We begin with the discrete fields, and take $X=\Z^d$. We let $\Deltad u(x)=\sum_{y\sim x}(u(y)-u(x))$ be the discrete Laplacian\footnote{In \cite{M22} the alternative normalization $\Deltad u(x)=\frac{1}{2d}\sum_{y\sim x}(u(y)-u(x))$ is used. These normalizations have no effect on the resulting fields except scaling them by a constant factor. We prefer the present normalization as it is most natural from a PDE point of view.}, and let $G^{-\Deltad}$ be the Green's function of $-\Deltad$, i.e. the function $G^{-\Deltad}\colon\Z^d\times\Z^d\to\R$ satisfying $-\Deltad G^{-\Deltad}(\cdot,y)=\delta_y$ and $G^{-\Deltad}(x,y)\to 0$ as $|x-y|\to\infty$. For $d\ge3$ there is a unique such Green's function, and the discrete Gaussian free field can be defined as the unique centered Gaussian field on $\Z^d$ such that \[\Cov(f(x),f(y))=G^{-\Deltad}(x,y)\quad\forall x,y\in\Z^d.\]

The discrete membrane model is defined similarly. We let $(\Deltad)^2$ be the discrete Bilaplacian and $G^{(\Deltad)^2}$ be its Green's function. Then the discrete membrane model is the unique centered Gaussian field on $\Z^d$ such that \[\Cov(f(x),f(y))=G^{(\Deltad)^2}(x,y)\quad\forall x,y\in\Z^d.\]

For the continuous Gaussian free field things are slightly more subtle, as these fields cannot be defined in a pointwise sense (at least when $d\ge2$ or $d\ge4$, respectively). However, they can be defined as random tempered distributions. We take $X=\R^d$, and let $\Delta$ be the standard continuous Laplacian. We denote by $\Sc(\R^d)$ the space of Schwartz functions on $\R^d$, and by $\Sc'(\R^d)$ its dual, the space of tempered distributions. We let $G^{-\Delta}$ be the Green's function of $-\Delta$ (which for $d\ge3$ is uniquely characterized by the condition that $G^{-\Delta}(x,y)\to 0$ as $|x-y|\to\infty$). Then the continuous Gaussian free field is the unique random element of $\Sc'(\R^d)$ such that
\begin{align*}
\E((f,u))&=0\quad\forall u\in\Sc(\R^d),\\
\Cov((f,u),(f,v))&=\int_{\R^d}\int_{R^d}G^{-\Delta}(x,y)u(x)v(y)\ud x\ud y\quad\forall u,v\in\Sc(\R^d).
\end{align*}
The continuous membrane model is again defined similarly. We let $\Delta^2$ be the Bilaplacian, $G^{\Delta^2}$ be its Green's function, and define the continuous membrane model as the unique random element of $\Sc'(\R^d)$ such that
\begin{align*}
\E((f,u))&=0\quad\forall u\in\Sc(\R^d),\\
\Cov((f,u),(f,v))&=\int_{\R^d}\int_{R^d}G^{\Delta^2}(x,y)u(x)v(y)\ud x\ud y\quad\forall u,v\in\Sc(\R^d).
\end{align*}

Having defined these fields, we can now show that they are indeed in $\F$.
\begin{proof}[Proof of Theorem \ref{t:fields_in_Falpha}]$ $

\emph{Step 1: Discrete Gaussian free field}\\
We begin with the case of the discrete Gaussian free field. Thus we consider $\gamma=1$, $X=\Z^d$ for $d\ge3$, the quadratic forms $E(u,v)=\sum_{x\in\Z^d}u(x)(-\Deltad v)(x)$ and $\Phi(u,v)=\sum_{x,y\in\Z^d}u(x)G^{-\Deltad}(x,y)v(y)$, and the operator $L=-\Deltad$. In this setting, we clearly have \eqref{e:Pstar} for any $B\ge4d$ and $\theta(n)=n$, and \eqref{e:H} with $\alpha=d$, and $\omega$ equal to some constant $C$. In order to check \eqref{e:F}, observe first that by polarization it suffices to consider the case $u=v$. We take $B=4d$ and write
\begin{align*}
2B(u,u)_{L^2(X)}-E(u,u)&=8d\sum_{x\in\Z^d}u(x)^2-\sum_{x\in\Z^d}\sum_{i=1}^d(u(x+e_i)-u(x))^2\\
&=\sum_{x\in\Z^d}\left(4du(x)^2+\sum_{i=1}^d2u(x)^2+2u(x+e_i)^2-(u(x+e_i)-u(x))^2\right)\\
&=\sum_{x\in\Z^d}\left(4du(x)^2+\sum_{i=1}^d(u(x+e_i)+u(x))^2\right)
\end{align*}
where $e_1,\ldots,e_d$ are the standard unit vectors in $\R^d$.
Hence, choosing $\HH_0=\R^{d+1}$ and defining $Ru(x):=\left(2\sqrt{d}u(x),u(x+e_1)+u(x),\ldots,u(x+e_d)+u(x)\right)$, we obtain \eqref{e:F} with $\theta_R=1$ \footnote{Note that $B\Id-L$ could even be written as $\hat R^*\hat R$ for $\hat Ru(x):=\left(u(x+e_1)+u(x),\ldots,u(x+e_d)+u(x)\right)$, i.e. for an $\R^d$-valued operator. This might allow to take $\HH_0=\R^d$ instead of $\HH_0=\R^{d+1}$ here. However, in the proof of Lemma \ref{l:existence_aj} we needed to work with the factor $((2B)^\gamma-\lambda^\gamma)$ instead of $(B^\gamma-\lambda^\gamma)$, as otherwise it is unclear how to obtain sharp estimates on the $a^{(j)}_t$ (cf. the very last sentence of the proof of the lemma). In other words, if one wanted to work with $\hat R$ instead of $R$, it would no longer be clear if the resulting $q_t$ have the optimal decay rate in $t$.}. The regularity assumption \eqref{e:R} clearly holds with $l_R=0$.

We can now apply Theorem \ref{t:mainthm} (ii), and obtain a family $Q_t$ of linear maps $L^2(X)\to L^2(X,\R^{2d+4})$ with associated densities $q_t\colon X\times X\to\R^{2d+4}$ satisfying \eqref{e:finite_range_mainthmH} and \eqref{e:estimate_kernelH}. As $-\Deltad$ is translation-invariant, $q_t$ is translation-invariant (cf. the discussion in Section \ref{s:discussion}), and so, slightly abusing notation, we write $q_t(x-y)$ for $q_t(x,y)$.

The density $q_t$ is $\R^{2d+4}$-valued, while we are looking for a $\R$-valued density. We can achieve this by letting $\q(\cdot,t)$ cycle through the $2d+4$ components of $q_t$. More precisely, we define a precursor $\tilde \q$ to $\q$ piecewise by setting
\[\tilde\q(\cdot,t)=(2d+4)e_j\cdot q_{n+(2d+4)\left(t-n-\frac{j-1}{2d+4} \right)} \text{ for }t\in \left[n+\frac{j-1}{2d+4},n+\frac{j}{2d+4}\right),n\in\N,j\in\{1,2,\ldots,2d+4\}\]
where (with a slight abuse of notation) $e_1,\ldots,e_{2d+4}$ are the standard unit vectors in $\R^{2d+4}$.
Note that for each fixed $j$ and $n$ we have 
\[\left\{n+(2d+4)\left(t-n-\frac{j-1}{2d+4}\right)\colon t\in \left[n+\frac{j-1}{2d+4}\right)\right\}=[n,n+1)\]
and so $\tilde\q$ "sees" all the components of $q_t$.

Indeed, the relation \eqref{e:finite_range_mainthmH} implies
\begin{equation}\label{e:fields_in_Falpha1}
\begin{split}
G^{-\Deltad}(x,y)&=\Phi(\delta_x,\delta_y)\\
&=\int_0^\infty(Q_t(\delta_x),Q_t(\delta_y))_{L^2(X,\R^{2d+4})}\ud t\\
&=\int_0^\infty\sum_{j=1}^{2d+4}(e_j\cdot q_t(x-\cdot),e_j\cdot q_t(y-\cdot))_{L^2(X)}\\
&=\sum_{n=0}^\infty\int_n^{n+1}\sum_{j=1}^{2d+4}(e_j\cdot q_t(x-\cdot),e_j\cdot q_t(y-\cdot))_{L^2(X)}\\
&=\sum_{n=0}^\infty\int_n^{n+1}(\tilde \q(x-\cdot,t),\tilde \q(y-\cdot,t))_{L^2(X)}\\
&=\int_0^\infty(\tilde \q(x-\cdot,t),\tilde \q(y-\cdot,t))_{L^2(X)}.
\end{split}
\end{equation}
Furthermore, $\tilde \q$ inherits the finite-range property from $q_t$. One easily checks that $\supp\tilde \q\subset\left\{(x,t)\colon |x|\le\lceil t\rceil\right\}$. This is almost property (iv) in Definition \ref{d:Falpha}. We now define
\[\q(x,t)=\begin{cases}0&t\le 1\\ \frac12\tilde \q\left(x',(t-1)/2\right)&t>1,x'\in\Z^d,x\in x'+\left[-\frac12,\frac12\right)^d\end{cases}\]
Then $\q$ does satisfy Property (iv) in Definition \ref{d:Falpha}, and \eqref{e:fields_in_Falpha1} implies that the field $f:=\q*_1 W\restriction_{\Z^d}$ has the law of the discrete Gaussian free field. 
It remains to check the other properties in Definition \ref{d:Falpha}. Property (i) follows from the symmetry of $-\Deltad$, (iii) is obvious. Furthermore,
according to \eqref{e:estimate_kernelH2} (with $\alpha=d$) we have
\[\|q_t\|_{L^2(X,\R^{2d+4})}\le\frac{C}{t^{(d-1)/2}}\]
for $t>1$ and $q_t=C_B\Id$ for $t\le1$. This means that also
\[\|\q(\cdot,t)\|_{L^2(\R^d)}\le\frac{C_d}{t^{(d-1)/2}}\]
and so we easily obtain Property (v) as well. This completes the verification that the discrete Gaussian free field is in class $\F_{d-2}$.

\emph{Step 2: Discrete membrane model}\\
The proof of Theorem \ref{t:fields_in_Falpha} for the discrete membrane model is very similar. This time we take $\gamma=\frac12$ and consider $E(u,v)=\sum_{x\in\Z^d}u(x)(\Deltad)^2v(x)$ and $L=(\Deltad)^2$.
We take $B=16d^2$ and note that a calculation very similar to the one before shows that
\[
\sqrt{2B}(u,u)_{L^2(X)}-(u,-\Deltad u)=\sum_{x\in\Z^d}\left(4(\sqrt{2}-1)du(x)^2+\sum_{i=1}^d(u(x+e_i)+u(x))^2\right)
\]
and so we can take $Ru(x):=\left(2\sqrt{(\sqrt{2}-1)d}u(x),u(x+e_1)+u(x),\ldots,u(x+e_d)+u(x)\right)$, $\HH_0=\R^{d+1}$, $\theta_R=1$. Moreover we have the heat kernel bound \eqref{e:H} with $\alpha=\frac{d}{2}$, and \eqref{e:R} holds with $l_R=1$. Proceeding now as before, we see that the discrete membrane model is indeed in $\F_{d-4}$.

\emph{Step 3: Continuous Gaussian free field}\\
The proofs for the continuous fields are in some sense easier, as we no longer need to find a factorization as in \eqref{e:F}. On the other hand, we now need extra care as the relevant fields are no longer defined in a pointwise sense.

We let $X=\R^d$. It is clear that $L=-\Delta$ satisfies assumptions \eqref{e:P} and \eqref{e:H} with $\gamma=1$ and $\alpha=d$. So from Theorem \ref{t:mainthm} we obtain a family $Q_t$ of linear maps $L^2(X)\to L^2(X)$ with associated densities $q_t\colon X\times X\to\R$ such that \eqref{e:estimate_kernel} and \eqref{e:estimate_kernelH} hold. Again, the $q_t$ are translation-invariant, and we write $q_t(x-y)$ for $q_t(x,y)$.  The finite-range property \eqref{e:finite_range_mainthm} implies in particular that
\begin{equation}\label{e:fields_in_Falpha2}
\begin{split}
Cov((f,u),(f,v))&=\int_{\R^d}\int_{R^d}G^{-\Delta}(x,y)u(x)u(y)\ud x\ud y\\
&=\int_0^\infty (q_t*u)(z)(q_t*v)(z)\ud z \ud t
\end{split}
\end{equation}
for $u,v\in\Sc(\R^d)$.

We are actually looking for a decomposition not of $f$ but of its mollified version $\eta*f$. It turns out than we can just take $\eta*q_t$ instead of $q_t$ to achieve this. Indeed, one easily checks that
\[\Cov((\eta*f,u),(\eta*f,v))=\Cov((f,\eta*u),(f,\eta*v))\]
(here we used that $\eta$ is symmetric around 0, otherwise we would need to take $\eta(-\cdot)$ on the right-hand side), and using \eqref{e:fields_in_Falpha1} and the commutativity and associativity of convolution we can rewrite this as
\begin{align*}
\Cov((\eta*f,u),(\eta*f,v))&=\int_0^\infty (q_t*(\eta*u))(z)(q_t*(\eta*v))(z)\ud z \ud t\\
&=\int_0^\infty ((\eta*q_t)*u)(z)((\eta*q_t)*v)(z)\ud z \ud t.
\end{align*}
This means that if we set 
\[\tilde\q(x,t)=\eta*q_t(x)=\int_{\R^d}\eta(x-y)q_t(y)\ud y\]
then $\eta*f\overset{\text{law}}{=}\tilde\q*_1 W$. Moreover, as $\eta$ has finite support, the function $\tilde\q$ has the finite-range property $\supp\tilde \q\subset\left\{(x,t)\colon |x|\le t+\diam\supp\eta\right\}$.

Now similarly as in Step 1, we set
\[\q(x,t)=\begin{cases}0&t\le \diam\supp\eta\\ \frac12\tilde \q\left(x,(t-\diam\supp\eta)/2\right)&t>\diam\supp\eta\end{cases}.\]
Then $\q$ satisfies Property (iv) in Definition \ref{d:Falpha}, and we still have that $\eta*f\overset{\text{law}}{=}\q*_1 W$. Properties (i), (ii) and (iii) are obvious from the construction, and for Property (v) we observe that \eqref{e:estimate_kernelH} implies 
\[\|q_t\|_{L^2(X)}\le\frac{C}{t^{(d-1)/2}}\]
and hence also
\[\|\q(\cdot,t)\|_{L^2(\R^d)}\le\frac{C}{t^{(d-1)/2}}\]
which easily implies the desired estimate. Thus the field is indeed in $\F_{d-2}$.

\emph{Step 4: Continuous membrane model}\\
The argument is completely analogous to the one in Step 3, and so we omit the details.
\end{proof}

\subsubsection*{Acknowledgements} 
The author would like to thank Roland Bauerschmidt, Stephen Muirhead and an anomymous referee for various helpful comments that helped improve the manuscript.

The author was supported by the Foreign Postdoctoral Fellowship Program of the Israel Academy of Sciences and Humanities, and partially by Israel Science Foundation grant number 421/20.

\paragraph{Declarations of interest:} none.

\small

\end{document}